\let\chapter\section
\title{Robust Policy Optimization with Baseline Guarantees}
\author{
Yinlam Chow\thanks{Institute of Computational \& Mathematical Engineering, Stanford University. Stanford CA, USA 94305},\,\, Marek Petrik\thanks{IBM T. J. Watson Research Center, Yorktown NY, USA 10598},\,\, Mohammad Ghavamzadeh\thanks{Adobe Research, San Jose CA, USA 95110} }
\newcommand{\opt}{^{\star}}
\newcommand{\polbase}{\pi_{\text{B}}}	
\newcommand{\eye}{\mathbf{I}}
\newcommand{\tr}{^{\mathsf{T}}}
\newcommand{\pol}{\pi}
\newcommand{\indist}{p_0}
\newcommand{\return}{\rho}
\newcommand{\disc}{\gamma}
\newcommand{\rmax}{R_{\max}}
\newcommand{\BEAS}{\begin{eqnarray*}}
\newcommand{\EEAS}{\end{eqnarray*}}
\newcommand{\BEQ}{\begin{equation}}
\newcommand{\EEQ}{\end{equation}}
\newcommand{\BIT}{\begin{itemize}}
\newcommand{\EIT}{\end{itemize}}
\newcommand{\real}{{\mathbb{R}}}
\newcommand{\reals}{\real}
\def\A{\mathcal{A}}
\def\P{P}
\def\X{\mathcal{X}}
\def\U{\mathcal{U}}
\def\M{\mathcal{M}}
\begin{document}

\maketitle


\vspace{-0.25in}
\begin{abstract}
Our goal is to compute a policy that guarantees improved return over a baseline policy even when the available MDP model is inaccurate. The inaccurate model may be constructed, for example, by system identification techniques when the true model is inaccessible. When the modeling error is large, the standard solution to the constructed model has no performance guarantees with respect to the true model.  In this paper we develop algorithms that provide such performance guarantees and show a trade-off between their \emph{complexity} and \emph{conservatism}. Our novel model-based safe policy search algorithms leverage recent advances in robust optimization techniques. Furthermore we illustrate the effectiveness of these algorithms using a numerical example.
\end{abstract}

\vspace{-0.1in}
\section{Introduction}\label{sec:intro}
\vspace{-0.1in}
Many problems in science and engineering can be formulated as sequential decision-making under uncertainty. A common scenario in such problems in many different areas, such as online marketing, inventory control, health informatics, and computational finance, is that we are given a batch of data generated by the current strategy(ies) of the company (hospital, investor), and we are asked to find a good or an optimal policy. Although there are many techniques to find a good policy given a batch of data, there are not much results to guarantee that the obtained policy will perform well in the real system without deploying it. Since deploying an untested policy might be risky and harmful for the business, the product (hospital, investment) manager does not allow it unless we provide her with some sort of guarantees on the performance of the policy, e.g.,~convince her that the policy performs at least as well her existing strategy. 

Our focus is on a {\em model-based} approach to this fundamental problem. In this approach, we first use the batch of data and build a {\em simulator} that mimics the behavior of the dynamical system under studies (online advertisement, inventory system, emergency room of a hospital, financial market), together with an {\em error function} that bounds its accuracy, and then use this simulator to generate data and learn a (good) policy. The main challenge here is to have guarantees on the performance of the learned policy, given the error in the simulator. This line of research is closely related to the area of {\em robust} learning and control. What makes our problem different than the standard robust learning and control~\cite{ahmed2013regret,even2003action,hanasusanto2014two,iyengar2005robust,kalyanasundaram2002markov,li2007robust,le2007robust,wiesemann2013robust} is the existence of a {\em baseline} policy (e.g.,~current company's strategy), which often has a reasonable performance. This difference allows us to develop algorithms, whose performance (the performance of their returned policy) is better than the standard robust methods that optimize for the worst-case scenario.    

In this work, we assume that {1)} the sequential decision-making problem can be modeled as an \emph{infinite-horizon} Markov decision process (MDP); {2)} we are given a {\em simulator} of this system together with an {\em error function} that bounds its accuracy (we briefly discuss how the simulator and error function can be built from the batch of data in Appendix~\ref{subsec:sampling_bounds}); {3)} we are given a {\em baseline policy} for the problem (e.g.,~the current strategy of the company); and {4)} the performance of the baseline policy ({\em baseline performance}) is known (this is a reasonable assumption as the batch of data is often large enough to have an accurate estimate of the performance of its generating strategy), and our goal is to find a policy that is {\em safe}, i.e.,~performs at least as well as the base policy in the real-world. We present four algorithms to tackle this problem in Sections~\ref{subsec:reward-adjusted} to~\ref{sec:combined}. For each algorithm, we prove that its returned policy is safe and provide a bound on its performance loss w.r.t.~an optimal policy of the real system. From each proposed algorithm to the next, the \emph{computational complexity} grows, but at the same time, the chance of finding a safe policy other than the obvious solution of the baseline policy also increases (the safe policy search become less \emph{conservative}). These major findings are summarized in Figure \ref{fig:flow}, whose the notations are clearly defined in latter sections. We show this change in the behavior of our algorithms through a simple example in Section \ref{sec:experiments}. This example also serves as a proof of concept for the algorithms in Section \ref{subsec:reward-adjusted} to \ref{sec:combined}. Another important difference between the algorithms in Section \ref{subsec:reward-adjusted} to~\ref{subsec:aug_robust} and the one in Section \ref{sec:combined} is that the latter directly works with the baseline policy, while the former uses it in an indirect way, and works with the baseline performance. 
\begin{figure*}[h!tpb]\footnotesize{
\begin{tikzpicture}[thick,scale=0.8, every node/.style={scale=0.8}]
  \node[draw, align = center] (begin) at (0,0.5) {Find a safe policy $\pi$, i.e.,\\ $\rho(\pi,\M^\star)\geq\rho(\pi_B,\M^\star)$?};
   \node[align=center] (q1) at (13.5,2) {{\small Low $C1$ High $C2$}};
 \node[single arrow,draw=black,fill=black!10,minimum height=2cm,shape border rotate=270] at (13.5,0.5) {};
  \node[align=center] (q1) at (13.5,-1) {{\small High $C1$ Low C2}};
  \node[draw,align=center,fill=green] (RA) at (6.5,2) {RaMDP\\Section \ref{subsec:reward-adjusted}};
   \node[draw,align=center,fill=yellow] (R) at (6.5,1) {RMDP\\Section \ref{subsec:robust}};
  \node[draw,align=center,fill=cyan] (AR) at (6.5,-0) {Augmented RMDP\\Section \ref{subsec:aug_robust}};
   \node[draw,align=center,fill=magenta] (I) at (6.5,-1) {Combine Robust and Baseline Policies\\Section \ref{sec:combined}};
    
  \draw[->,draw=black] (begin) -- (RA) node [midway, above, sloped] {{\small Standard MDP}};
  \draw[->,draw=black] (begin) -- (R) node [midway, above, sloped] {{\small RMDP}};
  \draw[->,draw=black] (begin) -- (AR) node [midway, above, sloped] {{\small Iterative RMDP}};
  \draw[->,draw=black] (begin) -- (I) node [midway, above, sloped] {{\small DRO}};
  
   \node[draw,align=center,fill=green] (piRA) at (11.5,2) {Policy \\$\pi_{Ra}$};
   \node[draw,align=center,fill=yellow] (piR) at (11.5,1) {Policy \\$\pi_{R}$};
  \node[draw,align=center,fill=cyan] (piAR) at (11.5,-0) {Policy \\$\pi_{AR}$};
   \node[draw,align=center,fill=magenta] (piI) at (11.5,-1) {Policy \\$\pi_{I}$};

  \draw[->,draw=black] (RA) -- (piRA) node [midway, above, sloped] {$A/R$};
  \draw[->,draw=black] (R) -- (piR) node [midway, above, sloped] {$A/R$};
  \draw[->,draw=black] (AR) -- (piAR) node [midway, above, sloped] {$A/R$};
  \draw[->,draw=black] (I) -- (piI) node [midway, above, sloped] {Solution};
  
  \node[align=center] (q1) at (0,-0.5) {{\small ($C1$): Computation Complexity}};
  \node[align=center] (q2) at (0,-1) {{\small ($C2$): Conservatism}};
\end{tikzpicture}
\caption{An overview of various methods for finding \emph{safe} policies. Here $A/R$ stands for acceptance/rejection of the offline test. The arrow illustrates the computational complexity/conservatism trade-off of different methods.}}
\label{fig:flow}
\end{figure*}
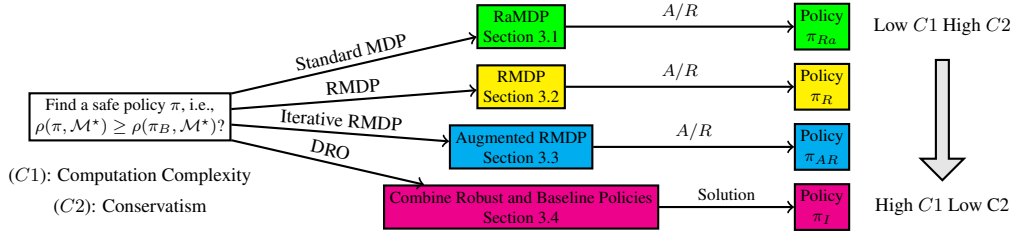

\section{Preliminaries}  \label{sec:prelimMDP}
\vspace{-0.1in}
A $\gamma$-discounted MDP is a tuple $\M=\langle\X,\A,r,P,p_0,\gamma\rangle$, where $\X$ and $\A$ are the state and action spaces; $r(x,a)\in[-R_{\max},R_{\max}]$ is the bounded reward function; $P(\cdot|x,a)$ is the transition probability function; and $p_0(\cdot)$ is the initial state distribution. A solution to MDP $\M$ is a stationary policy $\pi$ that is a mapping from $\M$'s state space to a distribution over its action space, i.e.,~$\pi:\X\times\A\rightarrow [0,1]$. We denote by $\Pi_S$, the set of all such policies for MDP $\M$. We define the performance of $\pi$ in the world modeled by MDP $\M$ as 
\begin{equation*}
\rho(\pi,\M)=\lim_{T\rightarrow \infty} \mathbb E\left(\sum_{t=0}^{T-1}\gamma^t r\big(X_t,\pi(X_t)\big)\mid p_0\right) = p_0^\top V^\pi_{\M},
\end{equation*}
where $X_t$ is the random variable representing the state of the MDP $\M$ at time-step $t$ and $V^\pi_{\M}$ is the value function of policy $\pi$ in $\M$. We also define an optimal policy as $\pi^*\in\argmax_{\pi\in\Pi_S}\rho(\pi,\M)$.

As mentioned in Section~\ref{sec:intro}, we use the historical data to build a {\em simulator} of the system, together with an error function that measures its accuracy. We denote by $\M^\star$ and $\widehat{\M}$ the (unknown) {\em true} and {\em simulated} MDPs with transition probability functions $P^\star$ and $\widehat{P}$, respectively.\footnote{In this paper, we restrict our attention to error in the transition probability function to simplify the exposition; the results readily extend to the case with error in the reward function.} In order to capture the deviation between these models, we make the following assumption:
\begin{assumption} 
\label{asm:error}
For each $(x,a)\in\mathcal{X}\times \mathcal{A}$, the error function $e(x,a)$ bounds the $L_1$-norm of the difference between the true and estimated transition probabilities, i.e.,
\[ \|P^\star(\cdot|x,a)-\widehat{P}(\cdot|x,a)\|_1 \le e(x,a)~ . \]
\end{assumption}
In many practical situations, the deviation between $P^\star$ and $\widehat P$ is bounded with high probability. Nevertheless, here we restrict the error bound to be deterministic to simplify the analysis in latter sections. Extending these results with probabilistic bounds is direct and omitted for brevity. More information on building the simulator and computing the $L_1-$deviation bound (using an empirical distribution for simulator $\widehat P$ and the Weissman distribution bound~\cite{weissman2003inequalities}) is available in Appendix~\ref{subsec:sampling_bounds}.
%
\begin{remark}
Using the estimated transition probability function $\widehat{P}$ and the error function $e$, we may construct the {\em uncertainty set}
\[ \U(\widehat{P},e) = \left\{P\;:\;\|P(\cdot|x,a)-\widehat{P}(\cdot|x,a)\|_1\leq e(x,a),\;\forall x,a\in\X\times\A\right\}~. \]
This uncertainty set automatically defines an uncertainty set $\U(\widehat{\M},e)$ for MDPs. It is clear that the true transition probability function $P^\star$ (the true MDP $\M^\star$) belongs to the uncertainty set $\U(\widehat{P},e)$.
\end{remark}
Given Assumption \ref{asm:error}, we have the following upper-bound on the difference between the return of a policy $\pi$ in the true and simulated MDPs (with transition probabilities $P^\star$ and $\widehat{P}$, respectively).
\begin{lemma} \label{cor:policy_tran_error}
Given Assumption \ref{asm:error}, for any policy $\pi$, the difference between the return of a policy $\pi$ in the true and simulated MDPs is upper bounded as follows:
\[  |\return(\pi,\widehat{\M}) - \return(\pi,\M^\star)| \le \frac{\gamma\rmax}{1-\gamma} p_0\tr (\eye - \disc P^\star_\pi)^{-1} e_\pi,\]
where $P^\star_\pi$ and $e_\pi$ are the transition probability of the true MDP and the error function when the actions are taken according to policy $\pi$. \hfill{\bf{\em(Appendix~\ref{appendix:LemmaPolTranErr})}}
\end{lemma}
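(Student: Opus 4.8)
The plan is to follow the standard ``simulation lemma'' route: fix the policy $\pi$, write the Bellman fixed‑point equations for its value function in the two MDPs, subtract them, invert the resulting linear system, and bound the right‑hand side entrywise using Assumption~\ref{asm:error}.

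First I would record the Bellman equations $V^\pi_{\M^\star} = r_\pi + \disc P^\star_\pi V^\pi_{\M^\star}$ and $V^\pi_{\widehat{\M}} = r_\pi + \disc \widehat{P}_\pi V^\pi_{\widehat{\M}}$, where $r_\pi$ is the (common) reward vector under $\pi$. Subtracting and inserting $\pm\,\disc P^\star_\pi V^\pi_{\widehat{\M}}$ gives $(\eye - \disc P^\star_\pi)\big(V^\pi_{\widehat{\M}} - V^\pi_{\M^\star}\big) = \disc\,(\widehat{P}_\pi - P^\star_\pi) V^\pi_{\widehat{\M}}$. Since $\disc\in[0,1)$ and $P^\star_\pi$ is row‑stochastic, $\eye - \disc P^\star_\pi$ is invertible with $(\eye - \disc P^\star_\pi)^{-1} = \sum_{k\ge 0}\disc^k (P^\star_\pi)^k \ge 0$ entrywise, so $V^\pi_{\widehat{\M}} - V^\pi_{\M^\star} = \disc (\eye - \disc P^\star_\pi)^{-1}(\widehat{P}_\pi - P^\star_\pi) V^\pi_{\widehat{\M}}$. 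Taking the inner product with $p_0\ge 0$ and using $\return(\pi,\M) = p_0\tr V^\pi_{\M}$ yields $\return(\pi,\widehat{\M}) - \return(\pi,\M^\star) = \disc\, p_0\tr (\eye - \disc P^\star_\pi)^{-1} (\widehat{P}_\pi - P^\star_\pi) V^\pi_{\widehat{\M}}$.

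It then remains to bound this scalar. Writing $b\tr := p_0\tr(\eye - \disc P^\star_\pi)^{-1}\ge 0$ and $w := (\widehat{P}_\pi - P^\star_\pi)V^\pi_{\widehat{\M}}$, H\"older's inequality applied row‑by‑row gives $|w(x)| \le \|\widehat{P}_\pi(\cdot|x) - P^\star_\pi(\cdot|x)\|_1\,\|V^\pi_{\widehat{\M}}\|_\infty \le e_\pi(x)\cdot \rmax/(1-\disc)$, using Assumption~\ref{asm:error} (after averaging over actions under $\pi$, which defines $e_\pi$) together with the uniform bound $\|V^\pi_{\widehat{\M}}\|_\infty \le \rmax/(1-\disc)$ coming from $|r|\le\rmax$. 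Hence $|b\tr w| \le \sum_x b(x)|w(x)| \le \frac{\rmax}{1-\disc}\, b\tr e_\pi$, and multiplying by $\disc$ gives the claimed bound.

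The only points that need care are the sign facts — that $(\eye-\disc P^\star_\pi)^{-1}$ has nonnegative entries and $p_0\ge 0$, which is precisely what legitimizes pulling the absolute value inside the sum against $e_\pi\ge 0$ — and choosing the expansion direction so that the resolvent involves $P^\star_\pi$ rather than $\widehat{P}_\pi$ (the symmetric choice would leave $V^\pi_{\M^\star}$ inside, with the same final estimate since both value functions are bounded by $\rmax/(1-\disc)$). Neither is a genuine obstacle, and an entirely analogous argument covers the case of error in the reward function, as noted in the paper's footnote.
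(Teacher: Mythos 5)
Your proof is correct and follows essentially the same route as the paper's: the paper proves a slightly more general intermediate lemma (two MDPs with possibly different rewards and policies), but its core steps are the same add-and-subtract decomposition yielding $(\eye-\disc P^\star_\pi)^{-1}\bigl[\,\cdot\, + \disc(P^\star_\pi-\widehat{P}_\pi)V^\pi_{\widehat{\M}}\bigr]$, the same H\"older bound with $\|V^\pi_{\widehat{\M}}\|_\infty\le \rmax/(1-\disc)$, and the same monotonicity/nonnegativity argument before multiplying by $p_0$. Your remark about averaging $e(x,a)$ over $\pi$ to get $e_\pi$ is the right way to handle stochastic policies and is consistent with the paper's per-state assumption.
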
 

As discussed in Section~\ref{sec:intro}, we assume that we are provided with a {\em baseline policy} $\pi_B$ and we have a very good approximation of its performance $\rho(\pi_B,\M^\star)$. We call a policy $\pi$ {\em safe} if it is guaranteed to perform not worse than the baseline policy in the true MDP $\M^\star$, i.e.,~$\rho(\pi,\M^\star)\ge\rho(\pi_B,\M^\star)$. In the next section, we aim to explore several methods to find a safe policy $\pi$, given the simulator $\widehat{\M}$, the error function $e$, the baseline policy $\pi_B$, and the baseline performance $\rho(\pi_B,\M^\star)$. For each method, we provide a bound on the performance loss of its returned policy $\pi$ w.r.t.~an optimal policy of the true MDP $\pi^*_{\M^\star}$, i.e.,~$\Phi(\pi)\stackrel{\Delta}{=}\rho(\pi^*_{\M^\star},\M^\star)-\rho(\pi,\M^\star)$. 

\section{Computing Safe Policies} 
\label{sec:SoluRobustBaseline}
\vspace{-0.1in}
In this section, we present several different solutions to finding safe policies in MDP problems, organized from simple to more complex, and discuss their relative advantages and disadvantages. 

Before presenting any solution to this problem, let us look at the naive approach of solving the simulated MDP $\widehat{\M}$. Let $\pi_S$ be an optimal policy of $\widehat{\M}$, i.e.,~$\pi_S\in\argmax_{\pi\in\Pi_S}\rho(\pi,\widehat{\M})$ (or equivalently $\pi_S=\pi^\star_{\widehat{\M}}$). The following theorem quantifies the performance loss of this policy.

\begin{theorem}  \label{thm:perf_naive}
Let $\pi_S$ be an optimal policy of the simulator $\widehat{\M}$. Then under Assumption~\ref{asm:error}, we have
\begin{equation*}
\Phi(\pi_S) \le \frac{2 \gamma \rmax}{(1-\disc)^2}  \| e \|_\infty. \quad\quad\quad\quad\quad\quad\quad \text{\bf{\em(Appendix~\ref{appendix:ThmPerfNaive})}}
\end{equation*}
\end{theorem}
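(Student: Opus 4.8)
The plan is to control the performance loss $\Phi(\pi_S)=\return(\pi^\star_{\M^\star},\M^\star)-\return(\pi_S,\M^\star)$ by inserting the return in the simulated MDP as an intermediate quantity and bounding each piece via Lemma~\ref{cor:policy_tran_error}. First I would write the chain
\[
\return(\pi^\star_{\M^\star},\M^\star)-\return(\pi_S,\M^\star)
= \big[\return(\pi^\star_{\M^\star},\M^\star)-\return(\pi^\star_{\M^\star},\widehat{\M})\big]
+ \big[\return(\pi^\star_{\M^\star},\widehat{\M})-\return(\pi_S,\widehat{\M})\big]
+ \big[\return(\pi_S,\widehat{\M})-\return(\pi_S,\M^\star)\big].
\]
The middle bracket is $\le 0$ because $\pi_S$ is optimal for $\widehat{\M}$, so it can be dropped. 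The first and third brackets are each bounded in absolute value by Lemma~\ref{cor:policy_tran_error}, applied with $\pi=\pi^\star_{\M^\star}$ and $\pi=\pi_S$ respectively.

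Next I would simplify the bound from Lemma~\ref{cor:policy_tran_error}. For any policy $\pi$, we have $e_\pi \le \|e\|_\infty \one$ componentwise, and the matrix $(\eye-\disc P^\star_\pi)^{-1}=\sum_{t\ge 0}\disc^t (P^\star_\pi)^t$ has nonnegative entries with each row summing to $1/(1-\disc)$, since $P^\star_\pi$ is stochastic. Combined with $p_0\tr\one = 1$, this gives
\[
\frac{\disc\rmax}{1-\disc}\, p_0\tr(\eye-\disc P^\star_\pi)^{-1} e_\pi
\;\le\; \frac{\disc\rmax}{1-\disc}\cdot\frac{1}{1-\disc}\,\|e\|_\infty
\;=\; \frac{\disc\rmax}{(1-\disc)^2}\,\|e\|_\infty,
\]
uniformly in $\pi$. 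Applying this to the two surviving brackets and adding yields $\Phi(\pi_S)\le \frac{2\disc\rmax}{(1-\disc)^2}\|e\|_\infty$, as claimed.

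I do not anticipate a serious obstacle here; the argument is a standard simulation-lemma-style telescoping. The only point requiring a little care is the direction of the inequality on the middle term — one must make sure $\pi_S$ being optimal in $\widehat{\M}$ is used correctly (it makes $\return(\pi^\star_{\M^\star},\widehat{\M})-\return(\pi_S,\widehat{\M})\le 0$, not the reverse), and that the two applications of Lemma~\ref{cor:policy_tran_error} are legitimate since the bound there is stated for \emph{any} policy. A secondary bookkeeping point is the uniform bound on $\|(\eye-\disc P^\star_\pi)^{-1}\|$ in the appropriate sense: it suffices that $(\eye-\disc P^\star_\pi)^{-1}$ maps the nonnegative vector $e_\pi$ to something dominated by $\frac{1}{1-\disc}\|e\|_\infty\one$, which follows from the Neumann series and row-stochasticity of $P^\star_\pi$, so no operator-norm subtleties arise.
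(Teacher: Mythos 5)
Your proposal is correct and follows essentially the same route as the paper's proof: both decompose $\Phi(\pi_S)$ using the simulated returns as an intermediary, drop the middle term via the optimality of $\pi_S$ in $\widehat{\M}$, apply Lemma~\ref{cor:policy_tran_error} once with $\pi^\star_{\M^\star}$ and once with $\pi_S$, and then bound each resulting term by $\frac{\gamma\rmax}{(1-\gamma)^2}\|e\|_\infty$ using $p_0\tr(\eye-\gamma P^\star_\pi)^{-1}\one = 1/(1-\gamma)$ (the paper phrases this last step via H\"older's inequality, you via the Neumann series, which is the same fact).
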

%

Unfortunately, there is no guarantee that $\pi_S$ is \emph{safe}, i.e., it performs no worse than the baseline policy $\pi_B$. Thus, deploying $\pi_S$ may lead to undesirable outcomes due to model uncertainties. In the following sections, we present methods whose solutions are guaranteed to be safe.

\vspace{-0.1in}
\subsection{Solution based on a Reward Adjusted MDP} 
\label{subsec:reward-adjusted}
\vspace{-0.1in}
In this section, we propose a method that relies on solving the MDP $\widetilde{\M}=\langle\X,\A,\widehat{r},\widehat{P},p_0,\gamma\rangle$, which is exactly the same as the simulated MDP $\widehat{\M}$, except that its reward function is \emph{adjusted} as 
\begin{equation} 
\label{eq:rhat}
\widehat{r}(x,a)=r(x,a)-\frac{\gamma R_{\max}}{1-\gamma} e(x,a), \quad \forall x\in\mathcal{X},\,\forall  a\in\mathcal{A}.
\end{equation}
The unique property of this MDP is that, when Assumption~\ref{asm:error} holds, the performance of any policy $\pi$ in $\widetilde{\M}$ is a lower-bound on its performance in the true MDP $\M^\star$, i.e.,~$\rho(\pi,\widetilde{\M})\le\rho(\pi,\M^\star)$ (see Theorem~\ref{thm:perf_reward_adjusted}). Algorithm~\ref{alg:reward-adjusted} summarizes the method for computing a policy using the reward adjusted MDP (RaMDP) $\widetilde{\M}$. It returns an optimal policy of $\widetilde{\M}$, when the performance of this policy in $\widetilde{\M}$ is better than the baseline performance $\rho(\pi_B,\M^\star)$, and returns $\pi_B$ otherwise. 

\IncMargin{1em}
\begin{algorithm}
\SetKwInOut{Input}{input}\SetKwInOut{Output}{output}
\Input{Simulated MDP $\widehat{\M}$, baseline performance $\rho(\pi_B,\M^\star)$ and error function $e$} 
\Output{Policy $\pi_{Ra}$}
$\widehat{r}(x,a) \leftarrow r(x,a)-\frac{\gamma R_{\max}}{1-\gamma} e(x,a)$ \;
$\pi_0 \leftarrow \arg \max_{\pi\in\Pi_S} \rho(\pi,\widetilde{\M})$ \;
$\rho_0 \leftarrow \rho(\pi_0,\widetilde{\M})$ \;
\text{If $\rho_0 > \rho(\pi_B,\M^\star)$}{
$\pi_{Ra} \leftarrow \pi_0$
}{
$\pi_{Ra} \leftarrow\pi_B$ \label{ln:algexp_condition}}
\Return $\pi_{Ra}$
\caption{Solution based on the RaMDP} \label{alg:reward-adjusted}
\end{algorithm}

Since the performance of any policy in $\widetilde{\M}$ is a lower-bound on its performance in $\M^\star$, it guarantes that the solution policy returned by Algorithm~\ref{alg:reward-adjusted}, $\pi_{Ra}$, performs at least as well as the baseline policy $\pi_B$. Theorem~\ref{thm:perf_reward_adjusted} shows that $\pi_{Ra}$ is a safe policy and quantifies its performance loss. 

\begin{theorem}\label{thm:perf_reward_adjusted}
Given Assumption~\ref{asm:error}, the solution $\pi_{Ra}$ of Algorithm~\ref{alg:reward-adjusted} is safe, i.e.,~$\rho(\pi_{Ra},\M^\star) \ge \rho(\pi_B,\M^\star)$. Moreover, its performance loss $\Phi(\pi_{Ra})$ satisfies 
\[ \Phi(\pi_{Ra}) \le \min\left\{\frac{2 \gamma \rmax}{(1-\disc)^2}  \| e_{\pi^\star_{\M^\star}} \|_{1,u^\star_{\M^\star}}, \Phi(\pi_{B})\right\},\]
where $u^\star_{\M^\star}$ is the normalized state occupancy frequency of the optimal policy $\pi^\star_{\M^\star}$. \hfill{\bf{\em(Appendix~\ref{appendix:ThmPerfRewAdj})}}
\end{theorem}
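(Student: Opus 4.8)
The plan is to split the argument into two parts, matching the two claims. For the safety claim, the key observation is the lower-bound property of the reward-adjusted MDP: I would first establish that for any policy $\pi$, $\rho(\pi,\widetilde{\M}) \le \rho(\pi,\M^\star)$. This follows by noting that $\rho(\pi,\widetilde{\M}) = \rho(\pi,\widehat{\M}) - \frac{\gamma R_{\max}}{1-\gamma} p_0\tr(\eye - \gamma \widehat{P}_\pi)^{-1} e_\pi$, and then comparing against Lemma~\ref{cor:policy_tran_error}; the subtlety is that the bound in that lemma is stated with $(\eye - \gamma P^\star_\pi)^{-1}$ while the natural expression here involves $(\eye - \gamma \widehat{P}_\pi)^{-1}$, so I would want a version of the error bound phrased in terms of the simulator's occupancy measure (one can derive $\rho(\pi,\widehat\M) - \rho(\pi,\M^\star) \le \frac{\gamma R_{\max}}{1-\gamma} p_0\tr(\eye - \gamma \widehat P_\pi)^{-1} e_\pi$ symmetrically, by swapping the roles of $\widehat P$ and $P^\star$ in the proof of Lemma~\ref{cor:policy_tran_error}). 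Given the lower-bound property, safety is immediate: if Algorithm~\ref{alg:reward-adjusted} returns $\pi_0$, then $\rho(\pi_0,\M^\star) \ge \rho(\pi_0,\widetilde{\M}) = \rho_0 > \rho(\pi_B,\M^\star)$; if it returns $\pi_B$, safety holds trivially.

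For the performance-loss bound, the term $\Phi(\pi_B)$ in the minimum is again trivial: if the algorithm returns $\pi_B$ then $\Phi(\pi_{Ra}) = \Phi(\pi_B)$, and if it returns $\pi_0$ then $\rho(\pi_0,\M^\star) \ge \rho_0 > \rho(\pi_B,\M^\star)$, so $\Phi(\pi_0) < \Phi(\pi_B)$. Hence it suffices to bound $\Phi(\pi_{Ra})$ by $\frac{2\gamma R_{\max}}{(1-\gamma)^2}\|e_{\pi^\star_{\M^\star}}\|_{1,u^\star_{\M^\star}}$ in the case where $\pi_0$ is returned. Here I would chain the following inequalities: $\rho(\pi^\star_{\M^\star},\M^\star) - \rho(\pi_0,\M^\star) \le \rho(\pi^\star_{\M^\star},\M^\star) - \rho(\pi_0,\widetilde{\M}) \le \rho(\pi^\star_{\M^\star},\M^\star) - \rho(\pi^\star_{\M^\star},\widetilde{\M})$, where the first step uses the lower-bound property and the second uses optimality of $\pi_0$ for $\widetilde{\M}$. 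It then remains to bound $\rho(\pi^\star_{\M^\star},\M^\star) - \rho(\pi^\star_{\M^\star},\widetilde{\M})$ by the claimed quantity.

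For that last bound I would write $\rho(\pi^\star_{\M^\star},\M^\star) - \rho(\pi^\star_{\M^\star},\widetilde{\M}) = \big(\rho(\pi^\star_{\M^\star},\M^\star) - \rho(\pi^\star_{\M^\star},\widehat{\M})\big) + \frac{\gamma R_{\max}}{1-\gamma} p_0\tr(\eye - \gamma\widehat{P}_{\pi^\star_{\M^\star}})^{-1} e_{\pi^\star_{\M^\star}}$, bound the first parenthesis in absolute value by $\frac{\gamma R_{\max}}{1-\gamma} p_0\tr(\eye - \gamma P^\star_{\pi^\star_{\M^\star}})^{-1} e_{\pi^\star_{\M^\star}}$ via Lemma~\ref{cor:policy_tran_error}, and then recognize that $(1-\gamma)\, p_0\tr(\eye - \gamma P^\star_{\pi})^{-1}$ is exactly the normalized occupancy-frequency row vector $u$, so each of the two terms is $\frac{\gamma R_{\max}}{(1-\gamma)^2}$ times a $u$-weighted $\ell_1$ quantity $\|e_{\pi^\star_{\M^\star}}\|_{1,u}$. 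I expect the main obstacle to be handling the mismatch between the true occupancy measure $u^\star_{\M^\star}$ and the simulator occupancy measure appearing in the $\widehat{P}$ term: to get both terms expressed against $u^\star_{\M^\star}$ one either needs an additional comparison of the two occupancy measures (using Assumption~\ref{asm:error} again, which itself costs a factor controlled by $e$) or a slightly more careful bookkeeping, e.g. replacing the simulator-occupancy term by a uniform $\|e\|_\infty/(1-\gamma)^2$ bound and folding it in — so the precise constant $2$ and the precise weighting in the statement is where care is required. I would also double-check that the $\|\cdot\|_{1,u^\star_{\M^\star}}$ notation in the statement denotes $\sum_{x} u^\star_{\M^\star}(x)\, e_{\pi^\star_{\M^\star}}(x)$ rather than an unweighted norm, since the whole accounting hinges on that reading.
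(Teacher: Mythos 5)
Your safety argument is sound: the ``swapped'' application of Lemma~\ref{lem:bound_transitions} (with $\M_1=\widehat\M$, $\M_2=\M^\star$) does give $\rho(\pi,\widehat\M)-\rho(\pi,\M^\star)\le\frac{\gamma\rmax}{1-\gamma}\,p_0\tr(\eye-\gamma\widehat P_\pi)^{-1}e_\pi$, hence $\rho(\pi,\widetilde\M)\le\rho(\pi,\M^\star)$, and the chain $\Phi(\pi_0)\le\rho(\pi^\star_{\M^\star},\M^\star)-\rho(\pi_0,\widetilde\M)\le\rho(\pi^\star_{\M^\star},\M^\star)-\rho(\pi^\star_{\M^\star},\widetilde\M)$ matches the paper. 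The genuine gap is the final step, and you correctly sense it: decomposing through $\widehat\M$ leaves the reward-penalty term weighted by the \emph{simulator} occupancy $p_0\tr(\eye-\gamma\widehat P_{\pi^\star_{\M^\star}})^{-1}$, and neither of your proposed fixes (comparing the two occupancy measures, or falling back to an $\|e\|_\infty$ term) reproduces the stated bound $\frac{2\gamma\rmax}{(1-\gamma)^2}\|e_{\pi^\star_{\M^\star}}\|_{1,u^\star_{\M^\star}}$ — the first introduces extra error terms, the second changes the norm. The paper avoids the mismatch by applying Lemma~\ref{lem:bound_transitions} \emph{once, directly to the pair} $(\M^\star,\widetilde\M)$, which differ in both rewards and transitions: taking $\M_1=\M^\star$, the lemma's two-sided bound is $(\eye-\gamma P^\star_\pi)^{-1}\bigl(r^\pi-\widehat r^\pi\pm\frac{\gamma\rmax}{1-\gamma}e_\pi\bigr)$, and since $r^\pi-\widehat r^\pi=\frac{\gamma\rmax}{1-\gamma}e_\pi$ the lower bound collapses to $0$ (giving safety for free) while the upper bound becomes $\frac{2\gamma\rmax}{1-\gamma}\,p_0\tr(\eye-\gamma P^\star_\pi)^{-1}e_\pi=\frac{2\gamma\rmax}{(1-\gamma)^2}\|e_\pi\|_{1,u^\pi_{\M^\star}}$, i.e., both the reward adjustment and the transition error are weighted by the \emph{true} occupancy. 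That single application is the idea your route is missing; as written, your decomposition only yields a mixed-occupancy bound, not the theorem. (Your reading of $\|e_\pi\|_{1,u}$ as $\sum_x u(x)\,e_\pi(x)$ is the intended one.)

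A second, smaller gap: it does not suffice to establish $\Phi(\pi_{Ra})\le\frac{2\gamma\rmax}{(1-\gamma)^2}\|e_{\pi^\star_{\M^\star}}\|_{1,u^\star_{\M^\star}}$ only when $\pi_0$ is returned — the theorem asserts this bound unconditionally, so in the rejection branch you must also show $\Phi(\pi_B)$ does not exceed it. This follows from the rejection condition $\rho(\pi_0,\widetilde\M)\le\rho(\pi_B,\M^\star)$ via $\Phi(\pi_B)\le\rho(\pi^\star_{\M^\star},\M^\star)-\rho(\pi_0,\widetilde\M)\le\rho(\pi^\star_{\M^\star},\M^\star)-\rho(\pi^\star_{\M^\star},\widetilde\M)$, which is exactly how the paper obtains the minimum of the two terms in both branches at once. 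It is a one-line fix, but your proposal omits it.
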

%
%
Note that Theorem~\ref{thm:perf_reward_adjusted} indicates that by this simple adjustment in the reward function of the simulated MDP $\widehat{\M}$, we may guarantee that our solution is safe. Moreover, it shows that the bound on the performance loss of $\pi_{Ra}$ is actually tighter than that for the solution $\pi_S$ of the simulator $\widehat{\M}$ in Theorem~\ref{thm:perf_naive}. In particular, the $L_\infty-$norm has been replaced by a weighted $L_1-$norm. In terms of computational complexity, since Algorithm \ref{alg:reward-adjusted} only requires solving a standard MDP, it can be implemented by either \emph{value iteration}, \emph{policy iteration} or \emph{linear programming}~\cite{puterman2014markov}. The corresponding complexity is therefore $O(|\mathcal A||\mathcal X|^2/(1-\gamma))$ (for value iteration)~\cite{littman1995complexity}, which is low-polynomial in $|\mathcal A|$, $|\mathcal X|$ and $1/(1-\gamma)$.  

While Algorithm \ref{alg:reward-adjusted} provides good theoretical guarantees and has low costs of computation, it may be overly conservative in many circumstances.   This is because the adjustment of the reward function is based on the assumption that there exists a state with the optimal value of $\rmax / (1-\gamma)$ and that this state is accessible from each other state with the reward $\rmax$. Since this assumption is rarely true, we propose a more adaptive formulation in the following section via RMDP methods. 

\subsection{Solution based on the Robust MDP} 
\label{subsec:robust}
\vspace{-0.1in}
Robust optimization is a standard technique to deal with model uncertainty. In this section, we propose an algorithm for finding a safe policy via solving the robust MDP (RMDP). We prove that the policy returned by this algorithm is safe and has better (sharper) worst-case guarantees. 

\begin{algorithm}
\SetKwInOut{Input}{input}\SetKwInOut{Output}{output}
\Input{Simulated MDP $\widehat{\M}$, baseline performance $\rho(\pi_B,\M^\star)$ and the error function $e$} 
\Output{Policy $\pi_R$}
\tcp{Construct the uncertainty set}
$\mathcal U(\widehat{P},e)=\left\{ P ~:~ \| P(\cdot|x,a)-\widehat{P}(\cdot|x,a)\|_1\leq e(x,a), \; \forall x,a\in\mathcal{X}\times\mathcal{A} \right\} $ \;
$ \pi_0 \leftarrow \argmax_{\pi\in\Pi_S} \min_{P\in\mathcal U(\widehat{P},e)}\rho\big(\pi,\M(P)\big)$ \;
$\rho_0 \leftarrow \min_{P\in\mathcal{U}(\widehat{P},e)} \rho\big(\pi_0,\M(P)\big)$ \; 
\text{If $\rho_0 > \rho(\pi_B,\M^\star)$}{
$\pi_R \leftarrow \pi_0$
}{
$\pi_R \leftarrow \pi_B$ \label{ln:algrob_condition}}
\Return $\pi_R$
\caption{Solution based on the RMDP} \label{alg:robust}
\end{algorithm}

The robust method is summarized in Algorithm \ref{alg:robust}. It first constructs an uncertainty set $\mathcal U(\widehat{P},e)$ using the simulator $\widehat{\M}$ and error function $e$. It then solves the resultant RMDP ($\M(P)$ with $P\in\mathcal U(\widehat{P},e)$) and returns its solution, if its worst-case performance over the uncertainty set is better than the baseline performance $\rho(\pi_B,\M^\star)$, and returns $\pi_B$ otherwise. 

Algorithm \ref{alg:robust} involves solving a ($s,a$-rectangular) RMDP. RMDPs satisfy many of the same properties as regular MDPs, such as the existence of an optimal stationary policies.  RMDPs can often be solved quite efficiently by value iteration, policy iteration, or modified policy iteration~\cite{iyengar2005robust,kaufman2013robust,nilim2005robust,wiesemann2013robust}. In practical terms, solving an RMDP can be expensive due to the need to compute the worst-case transition probability which relies on solving a convex optimization problem for each state and action in every iteration. However, the robust solution can be computed very efficiently when the uncertainty set is described, as in our formulation, in terms of an $L_1$-norm~\cite{Petrik2015}; similar results also exist for the $L_2$ norm~\cite{iyengar2005robust}.
Since implementing Algorithm \ref{alg:robust}  involves solving RMDPs, the complexity is $O(|\mathcal A||\mathcal X|^3\log(|\mathcal X|)/(1-\gamma))$ (for robust value iteration)~\cite{nilim2005robust}. It is higher than that in Algorithm \ref{alg:reward-adjusted} but is still polynomial in $|\mathcal A|$, $|\mathcal X|$ and $1/(1-\gamma)$.  
The following theorem shows that the policy $\pi_R$ is safe and quantifies its performance loss.  
\begin{theorem}\label{thm:perf_robust}
Given \ref{asm:error}, the nonempty solution $\pi_R$ of Algorithm \ref{alg:robust} is safe, i.e.,~$\rho(\pi_R,\M^\star) \ge \rho(\pi_B,\M^\star)$. Moreover, its performance loss $\Phi(\pi_R)$ satisfies
\[ \Phi(\pi_R) \le \min \left\{\frac{2 \gamma \rmax}{(1-\disc)^2}  \| e_{\pi^\star_{\M^\star}} \|_{1,u^\star_{\M^\star}},\Phi(\pi_B)\right\}, \]
where $u^\star_{\M^\star}$ is the normalized state occupancy frequency of the optimal policy $\pi^\star_{\M^\star}$. \hfill{\bf{\em(Appendix \ref{appendix:ThmPerfRobust})}}
\end{theorem}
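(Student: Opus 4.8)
Everything rests on two facts: $P^\star\in\U(\widehat P,e)$ (immediate from Assumption~\ref{asm:error}), and a ``two–MDP'' reading of Lemma~\ref{cor:policy_tran_error}: its argument only uses an $L_1$ bound between the transition functions of two MDPs and the discounted occupancy of whichever of the two is taken as the reference, so it applies verbatim with $\M^\star$ as reference and any $\M(P)$, $P\in\U(\widehat P,e)$, as the other MDP, once we note the triangle inequality $\|P(\cdot|x,a)-P^\star(\cdot|x,a)\|_1\le\|P(\cdot|x,a)-\widehat P(\cdot|x,a)\|_1+\|\widehat P(\cdot|x,a)-P^\star(\cdot|x,a)\|_1\le 2e(x,a)$. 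For \emph{safety}: if the test in Algorithm~\ref{alg:robust} fails then $\pi_R=\pi_B$ and there is nothing to prove; if it succeeds then $\pi_R=\pi_0$ and, since $P^\star\in\U(\widehat P,e)$, $\return(\pi_R,\M^\star)\ge\min_{P\in\U(\widehat P,e)}\return(\pi_0,\M(P))=\rho_0>\return(\pi_B,\M^\star)$. Thus $\return(\pi_R,\M^\star)\ge\return(\pi_B,\M^\star)$ in both cases, which immediately gives $\Phi(\pi_R)\le\Phi(\pi_B)$, one of the two terms of the $\min$.

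For the other term I would first record that $\return(\pi_R,\M^\star)\ge\rho_0$ in \emph{both} branches (in the failure branch this is exactly the failed test $\rho_0\le\return(\pi_B,\M^\star)=\return(\pi_R,\M^\star)$). Since $\pi_0$ is robust-optimal, $\rho_0=\max_{\pi\in\Pi_S}\min_{P\in\U(\widehat P,e)}\return(\pi,\M(P))\ge\min_{P\in\U(\widehat P,e)}\return(\pi^\star_{\M^\star},\M(P))$. Applying the two–MDP version of Lemma~\ref{cor:policy_tran_error} with error function $2e$ and reference $\M^\star$ to the policy $\pi^\star_{\M^\star}$ gives, for every $P\in\U(\widehat P,e)$,
\[ \return(\pi^\star_{\M^\star},\M(P)) \ge \return(\pi^\star_{\M^\star},\M^\star)-\frac{2\disc\rmax}{1-\disc}\,\indist\tr\big(\eye-\disc P^\star_{\pi^\star_{\M^\star}}\big)^{-1}e_{\pi^\star_{\M^\star}} = \return(\pi^\star_{\M^\star},\M^\star)-\frac{2\disc\rmax}{(1-\disc)^2}\,\big\|e_{\pi^\star_{\M^\star}}\big\|_{1,u^\star_{\M^\star}}, \]
where the last equality uses $(u^\star_{\M^\star})\tr=(1-\disc)\indist\tr(\eye-\disc P^\star_{\pi^\star_{\M^\star}})^{-1}$, the definition of the normalized occupancy. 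Minimizing the left side over $P$ and chaining with $\return(\pi_R,\M^\star)\ge\rho_0\ge\min_{P}\return(\pi^\star_{\M^\star},\M(P))$ yields $\Phi(\pi_R)=\return(\pi^\star_{\M^\star},\M^\star)-\return(\pi_R,\M^\star)\le\frac{2\disc\rmax}{(1-\disc)^2}\|e_{\pi^\star_{\M^\star}}\|_{1,u^\star_{\M^\star}}$, and together with $\Phi(\pi_R)\le\Phi(\pi_B)$ this is the claimed bound.

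The one delicate point — and the step I would be most careful about — is obtaining the performance-loss bound expressed through the occupancy $u^\star_{\M^\star}$ of the \emph{true} optimal policy rather than through an occupancy of $\widehat P$ or of the adversarial $P$. This is exactly why one must compare $\M(P)$ to $\M^\star$ directly (incurring the factor $2$ from the triangle inequality) and invoke Lemma~\ref{cor:policy_tran_error} with $\M^\star$ as the reference MDP, instead of routing the estimate through $\widehat\M$; the remaining manipulations are routine, and note that $\U(\widehat P,e)\ni\widehat P$ is always nonempty, consistent with the ``nonempty solution'' wording of the statement.
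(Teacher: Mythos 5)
Your proof is correct and follows essentially the same route as the paper's: both rest on $P^\star\in\mathcal{U}(\widehat P,e)$, the triangle inequality giving a $2e$ deviation from $P^\star$, Lemma~\ref{lem:bound_transitions} applied with $\M^\star$ as the reference MDP and $\pi=\pi^\star_{\M^\star}$, and the robust optimality of $\pi_0$. The only (harmless) difference is organizational: you observe $\rho(\pi_R,\M^\star)\ge\rho_0$ holds in both branches of the acceptance test, which lets you derive the weighted-$L_1$ term uniformly rather than via the paper's case-by-case combination into the $\min$.
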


Compared to $\pi_S$ and the bound in Theorem \ref{thm:perf_naive} on its performance loss, Theorem \ref{thm:perf_robust} indicates that the policy $\pi_R$ returned by Algorithm \ref{alg:robust} is safe and has a smaller bound on its performance loss. In particular, the bound depends only on a weighted $L_1-$norm of the errors for the optimal policy, instead of the $L_\infty-$norm over all policies. 

While the complexity of Algorithm \ref{alg:robust} is higher than that of Algorithm \ref{alg:reward-adjusted} (because solving RMDPs is more complex than standard MDP), Theorem \ref{thm:perf_robust} does not show any advantage for $\pi_R$ over the policy $\pi_{RA}$ returned by Algorithm \ref{alg:reward-adjusted}, neither in terms of safety nor in terms of the bound on its performance loss (Theorem \ref{thm:perf_reward_adjusted}). This arises the question that why should one use the more complex Algorithm \ref{alg:robust} in place of Algorithm \ref{alg:reward-adjusted}. Proposition \ref{prop:robust-vs-rewadj} provides an answer to this question and shows whenever Algorithm \ref{alg:reward-adjusted} returns a safe policy, so does Algorithm \ref{alg:robust}, while the converse is not necessarily true. This implies with extra computational complexity, the conservatism of safe policy search decreases.

\begin{proposition}
\label{prop:robust-vs-rewadj}
Given \ref{asm:error}, for each policy $\pi$, we have
\[ \min_{P\in\mathcal{U}(\widehat{P},e)} \return\big(\pi,\M(P)\big) \ge \return(\pi,\widetilde{\M})~, \]
where $\widetilde{M}$ is defined in Section \ref{subsec:reward-adjusted}.\hfill{\bf{\em(\ref{appendix:prop-robust-reward})}}
\end{proposition}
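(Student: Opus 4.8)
### Proof Proposal

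The plan is to compare, for a fixed policy $\pi$, the value function $\tilde{V}^\pi$ of $\pi$ in the reward-adjusted MDP $\widetilde{\M}$ against the robust value function $V_R^\pi \stackrel{\Delta}{=} \min_{P\in\mathcal{U}(\widehat{P},e)} V^\pi_{\M(P)}$, and to show $V_R^\pi \ge \tilde{V}^\pi$ pointwise; multiplying by $p_0 \ge 0$ then gives the claimed inequality on returns. I would run the argument at the level of the Bellman operators: let $\mathcal{T}_R$ denote the robust Bellman operator for $\pi$ over the $s,a$-rectangular set $\mathcal{U}(\widehat P, e)$, and $\tilde{\mathcal{T}}$ the ordinary Bellman operator for $\pi$ on $\widetilde{\M}$ (with reward $\widehat r = r - \tfrac{\gamma R_{\max}}{1-\gamma} e$ and transitions $\widehat P$). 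Both are monotone $\gamma$-contractions with unique fixed points $V_R^\pi$ and $\tilde V^\pi$ respectively, so by the standard monotonicity argument it suffices to prove the one-step domination $\mathcal{T}_R V \ge \tilde{\mathcal{T}} V$ for every $V$ (or at least for $V = \tilde V^\pi$, which is enough to conclude $V_R^\pi \ge \tilde V^\pi$ by iterating $\mathcal{T}_R$ from $\tilde V^\pi$ and using monotonicity).

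The heart of the matter is therefore the per-state-action inequality
\[
\min_{P(\cdot|x,a)\,:\,\|P(\cdot|x,a)-\widehat P(\cdot|x,a)\|_1 \le e(x,a)} \sum_{x'} P(x'|x,a)\, V(x') \;\ge\; \sum_{x'} \widehat P(x'|x,a)\, V(x') - \frac{\gamma R_{\max}}{1-\gamma}\, e(x,a),
\]
since once this holds for the ``inner'' term, adding $r(x,a)$ on both sides and taking the appropriate combination over actions under $\pi$ (which is the same convex-combination operation on both sides) yields $\mathcal{T}_R V \ge \tilde{\mathcal{T}} V$. To establish the displayed inequality, I would bound how much an adversarial reallocation of at most $e(x,a)$ units of probability mass (in $L_1$) can decrease the expectation $\sum_{x'} P(x'|x,a) V(x')$: moving a total mass of $e(x,a)/2$ from the states where $V$ is largest to the states where $V$ is smallest decreases the expectation by at most $\tfrac{1}{2} e(x,a)\big(\max_{x'} V(x') - \min_{x'} V(x')\big)$. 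Hence the left-hand side is at least $\sum_{x'}\widehat P(x'|x,a) V(x') - \tfrac{1}{2} e(x,a)\,\mathrm{span}(V)$, and it remains only to argue $\mathrm{span}(V) \le 2R_{\max}/(1-\gamma)$ so that $\tfrac12 e(x,a)\,\mathrm{span}(V) \le \tfrac{R_{\max}}{1-\gamma} e(x,a)$ — wait, this gives the constant $\tfrac{R_{\max}}{1-\gamma}$, not $\tfrac{\gamma R_{\max}}{1-\gamma}$; the extra factor of $\gamma$ comes out because in the Bellman recursion the $V$ appearing inside the transition term is already discounted, i.e. one compares $r(x,a) + \gamma\big[\min_P \sum P V\big]$ against $r(x,a) + \gamma \sum \widehat P V - \tfrac{\gamma R_{\max}}{1-\gamma} e(x,a)$, and the $\gamma$ multiplying the bracket supplies it. So the clean statement to prove is the bracket-level inequality $\min_P \sum_{x'} P(x'|x,a)V(x') \ge \sum_{x'}\widehat P(x'|x,a) V(x') - \tfrac{R_{\max}}{1-\gamma} e(x,a)$, valid whenever $\|V\|_\infty \le R_{\max}/(1-\gamma)$ (so $\mathrm{span}(V) \le 2R_{\max}/(1-\gamma)$), which holds for all the value functions in question since rewards are bounded by $R_{\max}$ and the discount is $\gamma$.

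The main obstacle, and the only genuinely delicate point, is making the $L_1$-perturbation bound on the inner product rigorous and matching constants exactly: one must verify that the worst-case mass transfer is indeed the ``top-to-bottom'' reallocation (a short exchange/rearrangement argument on the signed measure $P - \widehat P$, whose positive and negative parts each have mass at most $e(x,a)/2$), and then track the factor of $\gamma$ through the Bellman operator carefully so the constant lands on $\tfrac{\gamma R_{\max}}{1-\gamma}$ as in the definition of $\widehat r$. A secondary bookkeeping issue is confirming that the $s,a$-rectangularity of $\mathcal{U}(\widehat P,e)$ lets the single global minimization over $P$ decompose into independent per-$(x,a)$ minimizations inside the robust Bellman operator — this is standard (e.g. \cite{iyengar2005robust,nilim2005robust}) but should be invoked explicitly. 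Once the pointwise domination $V_R^\pi \ge \tilde V^\pi$ is in hand, taking $p_0^\top(\cdot)$ of both sides and recalling $\return(\pi,\M) = p_0^\top V^\pi_\M$ finishes the proof.
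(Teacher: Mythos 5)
Your high-level plan (compare fixed points of the policy-evaluation Bellman operators via a per-$(x,a)$ $L_1$-perturbation bound) can be made to work, but the direction in which you run the fixed-point comparison contains a genuine gap. The one-step domination $\mathcal{T}_R V \ge \tilde{\mathcal{T}} V$ is \emph{not} true ``for every $V$''; it needs a bound such as $\|V\|_\infty \le \rmax/(1-\gamma)$, and your fallback choice $V=\tilde{V}^\pi$ (the value function of $\widetilde{\M}$) does not satisfy it: the rewards of $\widetilde{\M}$ are $r-\frac{\gamma\rmax}{1-\gamma}e$, and since $e(x,a)$ can be as large as $2$, one only gets $\|\tilde{V}^\pi\|_\infty \le \frac{(1+\gamma)\rmax}{(1-\gamma)^2}$, which exceeds $\rmax/(1-\gamma)$. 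This is not just a looseness of the estimate: take three states, an absorbing ``good'' state with reward $\rmax$ and $e=0$, an absorbing ``bad'' state with reward $-\rmax$ and $e=2$, and a test state $s_3$ with reward $0$, nominal transition $\widehat{P}(\cdot|s_3,a)=\delta_{\text{good}}$ and error $e(s_3,a)=\epsilon$. A direct computation gives $\mathcal{T}_R[\tilde{V}^\pi](s_3)-\tilde{V}^\pi(s_3) = -\frac{\gamma^2\epsilon\rmax}{(1-\gamma)^2}<0$, so the starting inequality $\mathcal{T}_R\tilde{V}^\pi \ge \tilde{V}^\pi$ that you would iterate from is simply false in general; the justification ``rewards are bounded by $\rmax$'' applies to $\widehat{\M}$ and $\M(P)$, not to the penalized MDP $\widetilde{\M}$.

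The repair is immediate and brings you essentially onto the paper's proof: run the comparison at value functions of the \emph{unpenalized} MDPs. For any $P\in\mathcal{U}(\widehat{P},e)$ (in particular the minimizer $\bar{P}$), $\|V^\pi_{\M(P)}\|_\infty\le\rmax/(1-\gamma)$, so your H\"older/span bound gives $\tilde{\mathcal{T}}[V^\pi_{\M(P)}]\le V^\pi_{\M(P)}$; iterating the monotone contraction $\tilde{\mathcal{T}}$ yields $\tilde{V}^\pi\le V^\pi_{\M(P)}$ pointwise, and multiplying by $p_0$ and minimizing over $P$ finishes the proof. The paper does exactly this in closed form: it applies Lemma~\ref{lem:bound_transitions} (H\"older plus monotonicity of $(\eye-\gamma\widehat{P}_\pi)^{-1}$) with the minimizer $\bar{P}$ to get $\return(\pi,\M(\bar{P}))\ge\return(\pi,\widehat{\M})-\frac{\gamma\rmax}{1-\gamma}p_0\tr(\eye-\gamma\widehat{P}_\pi)^{-1}e_\pi$, and then observes that the right-hand side equals $\return(\pi,\widetilde{\M})$ because $\widetilde{\M}$ shares the transitions $\widehat{P}$ and differs only by the reward penalty. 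Your rectangularity bookkeeping and the tracking of the extra factor $\gamma$ through the Bellman recursion are fine; only the choice of the value function at which the one-step bound is invoked must be changed.
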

%

Note that the bound in $\pi_R$ is based on the worst-case transition probabilities in $\mathcal U(\widehat{P},e)$. While this approach guarantees to yield a safe policy, it may still be too conservative as solving a RMDP considers finding a safe policy under the worst-case scenario. Next, we investigate an alternative approach to return a safe but less conservative policy.

\subsection{Solution based on an Augmented Robust MDP}
\label{subsec:aug_robust}
\vspace{-0.1in}
As discussed at the end of Section~\ref{subsec:robust}, the goal in this section is to develop a new method that combines simulated and RMDPs, and reduces the conservatism of {\em safe policy search} compared to Algorithm~\ref{alg:robust}. We start this section by considering the following constraint optimization problem that finds a policy that maximizes the performance in the simulator and satisfies the safety constraint:
%
%
\begin{equation}
\label{eq:1_sim_real}
\max_{\pi\in\Pi_H}\quad \rho(\pi,\widehat{\M}), \quad\quad\quad\quad\quad \text{subject to}\quad \rho(\pi,\M^\star)\geq\rho(\pi_B,\M^\star),
\end{equation}
where $\Pi_H$ is the general set of history-based policies. To solve~\eqref{eq:1_sim_real}, we employ the Lagrangian relaxation procedure~\cite{bertsekas1999nonlinear} to convert it to the following unconstrained optimization problem:
%
%
\begin{equation}
\label{eq:2_sim_real}
\max_{\pi\in\Pi_H}\min_{\lambda\geq 0}L(\pi,\lambda) :=\rho(\pi,\widehat{\M})+\lambda \Bigl( \rho(\pi,\M^\star)-\rho(\pi_B,\M^\star) \Bigr),
\end{equation}
where $\lambda$ is the Lagrange multiplier. Unfortunately, solving the optimization problem~\eqref{eq:1_sim_real} is impossible, since the true MDP $\M^\star$ is unknown. Before describing how we tackle this issue, let us define a few terms and quantities. For the simulated MDP $\widehat{\M}$ and any MDP $\M(P)$ that is only different with $\widehat{\M}$ in its transition probability function, and for any $\lambda_1,\lambda_2\geq 0$, we define the {\em augmented} MDP $\M^A_{\lambda_1,\lambda_2}=\langle\X\times\X,\A,r^A,P^A,p^A_0,\gamma\rangle$, where $r^A(x,y,a)=\lambda_1r(x,a)+\lambda_2r(y,a)$, $P^A(x',y'|x,y,a)=P(x'|x,a)\widehat{P}(y'|y,a)$, and $p_0^A(x,y)=p_0(x)p_0(y)$. Here we can think of $x$'s and $y$'s as the states evolved according to the MDPs $\M(P)$ and $\widehat{\M}$, respectively. We denote by $\Pi_S^A$ the set of stationary Markovian policies over this augmented MDP. Using the augmented MDP, for any policy $\pi\in\Pi_H$, we define the Lagrangian function 
\begin{equation}
\label{eq:lag}
L_P(\pi,\lambda)=\rho\big(\pi,\M^A_{\lambda,1}(P)\big)-\lambda\mathcal \rho(\pi_B,\M^\star).
\end{equation}
Using the above definitions, the Lagrangian function in~\eqref{eq:2_sim_real} may be easily written as $L(\pi,\lambda)=L_{P^\star}(\pi,\lambda)$. From Theorem 3.6 in~\cite{altman1999constrained}, it can be easily shown that for any transition probability function $P\in\mathcal U(\widehat P,e)$, the following strong duality holds:
\begin{equation}
\label{eq:strong-dual}
\max_{\pi\in\Pi_H}\min_{\lambda\geq 0} L_P(\pi,\lambda)=\max_{\pi\in\Pi_S^A}\min_{\lambda\geq 0} L_P(\pi,\lambda)=\min_{\lambda\geq 0}\max_{\pi\in\Pi_S^A}  L_P(\pi,\lambda).
\end{equation}
Setting $P=P^\star$, \eqref{eq:strong-dual} implies that we can replace the class of history-based policies $\Pi_H$ with the set of stationary Markovian policies over the augmented MDP $\Pi^A_S$ in problem~\eqref{eq:2_sim_real}. This further means that $\Pi^A_S$ is the class of \emph{dominating} policies for the optimization problem~\eqref{eq:1_sim_real}. 

Recall that the duality theory~\cite{bertsekas1999nonlinear} indicates that if the dual Lagrangian problem is bounded, the primal Lagrangian problem is always feasible. This means that if we can find a lower-bound for the dual Lagrangian problem $\min_{\lambda\geq 0}\max_{\pi\in\Pi_S^A}L(\pi,\lambda)$ (note that $L(\pi,\lambda)=L_{P^\star}(\pi,\lambda)$ and $P^\star\in\mathcal U(\widehat P,e)$), the corresponding policy will be feasible for the constraint in~\eqref{eq:1_sim_real}, which itself means that it is safe. This motivates us to find a saddle-point for the (augmented) robust optimization problem
\begin{equation}
\label{problem:LB}
\min_{\lambda\geq 0}\;\max_{\pi\in\Pi_S^A}\;\min_{P\in\mathcal U(\widehat{P},e)} L_P(\pi,\lambda).
\end{equation}
Note that compared to the Lagrangian function $L(\pi,\lambda)$, in~\eqref{problem:LB}, we have replaced the true transition probability $P^\star$ with the worst-case transition probability over the uncertainty set $\mathcal U(\widehat P,e)$. The reason for finding a saddle-point of~\eqref{problem:LB} is because the solution is a lower-bound for the dual Lagrangian:   
\begin{equation*}
\min_{\lambda\geq 0}\max_{\pi\in\Pi_S^A}\min_{P\in\mathcal U(\widehat{P},e)} L_P(\pi,\lambda) \stackrel{\text{(a)}}{=} \min_{P\in\mathcal U(\widehat{P},e)}\min_{\lambda\geq 0}\max_{\pi\in\Pi_S^A} L_P(\pi,\lambda) \stackrel{\text{(b)}}{\ge} \min_{\lambda\geq 0}\max_{\pi\in\Pi_S^A} L(\pi,\lambda),
\end{equation*}
{\bf (a)} Theorem~1 in~\cite{nilim2005robust} shows that strong duality holds in $(\X\times\A)$-rectangular robust optimization problems, i.e.,~
\[
\min_{P\in\mathcal U(\widehat{P},e)}\max_{\pi\in\Pi_S^A} L_P(\pi,\lambda)=\max_{\pi\in\Pi_S^A}\min_{P\in\mathcal U(\widehat{P},e)} L_P(\pi,\lambda),
\] \\
{\bf (b)} This is from the fact that $L(\pi,\lambda)=L_{P^\star}(\pi,\lambda)$ and $P^\star\in\mathcal U(\widehat{P},e)$.

Therefore, if we find a saddle point $(\pi_0,\lambda^\star)$ of the (augmented) robust optimization problem~\eqref{problem:LB}, then the corresponding policy $\pi_0$ is safe. Given the above observations, we now present Algorithm~\ref{alg:robust_aug} and prove in Theorem~\ref{thm:perf_robust_aug} that the policy returned by this algorithm is safe and quantify its performance loss. On Line~\ref{ln:aug_rob_opt} of Algorithm~\ref{alg:robust_aug}, we use the conventional sub gradient descent approach to solve for a saddle-point. In this approach, we first fix the Lagrange multiplier and solve for an optimal stationary policy and then optimize for the Lagrangian multiplier (which is a convex optimization problem). These two steps are repeated until the solution converges to a saddle point. More details of this procedure can be found in Appendix~\ref{appendix:saddle_point}. Regarding to the computation cost of this approach, since each sub-gradient descent step involves a RMDP, similar to Algorithm \ref{alg:robust} it has complexity of $O(|\mathcal A||\mathcal X|^3\log(|\mathcal X|)/(1-\gamma))$ for robust value iteration. Thus the total complexity of Algorithm \ref{alg:robust_aug} is $O(|\mathcal A||\mathcal X|^3\log(|\mathcal X|)/(1-\gamma)(1/\sqrt{K}))$ \cite{boyd2004convex}, where $K$ is the number of iteration of sub-gradient descent and $O(1/\sqrt{K})$ is the standard convergence rate for first order methods in convex optimization. 

Regarding to the augmented Markovian policy $\pi_{0}$ in Algorithm \ref{alg:robust_aug}, since $\pi_0$ requires state information from both the uncertain MDP $\M(P)$ (with state $X_t$) and simulated MDP $\widehat\M$ (with state $Y_t$) . Therefore implementing this policy in real-world (i.e. $P=P^\star$) requires real-time state trajectories from the online simulator as well. This inevitably increases the complexity of implementation.
 
\begin{algorithm}
\SetKwInOut{Input}{input}\SetKwInOut{Output}{output}
\Input{Simulated MDP $\widehat{\M}$, baseline performance $\rho(\pi_B,\M^\star)$ and the error function $e$}
\Output{Policy $\pi_{AR}$}
Construct the uncertainty set $\mathcal U(\widehat{P},e)$ and augmented MDP ${\M}^A_{\lambda,1}(P),\;\forall P\in\mathcal U(\widehat{P},e)$ \;
Solve $\min_{\lambda\geq 0}\max_{\pi\in\Pi_S^A}\min_{P\in\mathcal U(\widehat{P},e)}L_P(\pi,\lambda)$ for a saddle-point $(\pi_0,\lambda^\star)$ \; \label{ln:aug_rob_opt}
\text{If a saddle point solution $(\pi_0,\lambda^\star)$ exists}{
$\pi_{AR} \leftarrow \pi_0$
}{
$\pi_{AR} \leftarrow\pi_B$}
\Return $\pi_{AR}$
\caption{Solution based on the Augmented RMDP} \label{alg:robust_aug}
\end{algorithm}

\begin{theorem}\label{thm:perf_robust_aug}
Given Assumption~\ref{asm:error}, the nonempty solution $\pi_{AR}$ of Algorithm~\ref{alg:robust_aug} is safe, i.e.,~$\rho(\pi_{RS},\M^\star) \ge \rho(\pi_B,\M^\star)$. Moreover, its performance loss $\Phi(\pi_{AR})$ satisfies 
\begin{equation*}
\Phi(\pi_{AR}) \le\min\left\{\frac{2 \gamma \rmax}{(1-\disc)^2}  \| e_{\pi^\star_{\M^\star}} \|_{1,u^\star_{\M^\star}},\Phi(\pi_B)\right\},
\end{equation*}
where $u^\star_{\M^\star}$ is the normalized state occupancy frequency of the optimal policy $\pi^\star_{\M^\star}$. \hfill{\bf{\em(Appendix~\ref{appendix:robust_aug_perf})}}
\end{theorem}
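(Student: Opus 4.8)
The plan is to prove the two claims separately, in each case splitting on whether Algorithm~\ref{alg:robust_aug} outputs the saddle-point policy $\pi_0$ or falls back on $\pi_B$. For an augmented policy $\pi\in\Pi_S^A$ and $P\in\mathcal U(\widehat P,e)$, use the notation $\rho(\pi,\M^A_{1,0}(P))$, $\rho(\pi,\M^A_{0,1}(P))$ for the discounted returns collected from the $x$- and the $y$-coordinate respectively (instances of the already-defined $\M^A_{\lambda_1,\lambda_2}$), so that $\rho(\pi,\M^A_{\lambda,1}(P))=\lambda\,\rho(\pi,\M^A_{1,0}(P))+\rho(\pi,\M^A_{0,1}(P))$, the real-world performance of $\pi$ is $\rho(\pi,\M^A_{1,0}(P^\star))$, and for the lift $\bar\sigma\in\Pi_S^A$ of a stationary $\sigma$ that ignores $y$ one has $\rho(\bar\sigma,\M^A_{1,0}(P))=\rho(\sigma,\M(P))$. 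Write $b^\star:=\frac{2\gamma\rmax}{(1-\gamma)^2}\,\|e_{\pi^\star_{\M^\star}}\|_{1,u^\star_{\M^\star}}$.

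\textbf{Safety and $\Phi(\pi_{AR})\le\Phi(\pi_B)$.} If no saddle point exists, $\pi_{AR}=\pi_B$ and both statements are trivial. If $(\pi_0,\lambda^\star)$ is a saddle point, then $\pi_{AR}=\pi_0$, and I make the argument sketched before the theorem precise as follows. With $g(\pi):=\min_{P\in\mathcal U(\widehat P,e)}\rho(\pi,\M^A_{1,0}(P))$, the map $\lambda\mapsto\min_{P}L_P(\pi_0,\lambda)=\min_{P}\bigl[\lambda\bigl(\rho(\pi_0,\M^A_{1,0}(P))-\rho(\pi_B,\M^\star)\bigr)+\rho(\pi_0,\M^A_{0,1}(P))\bigr]$ is a concave (infimum of affine) function of $\lambda$ and is minimized over $\lambda\ge 0$ at the finite point $\lambda^\star$; since the $y$-coordinate returns are uniformly bounded, a finite minimizer can exist only if the asymptotic slope $g(\pi_0)-\rho(\pi_B,\M^\star)$ is nonnegative. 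As $P^\star\in\mathcal U(\widehat P,e)$, this yields $\rho(\pi_0,\M^\star)=\rho(\pi_0,\M^A_{1,0}(P^\star))\ge g(\pi_0)\ge\rho(\pi_B,\M^\star)$, i.e.\ safety; and then $\Phi(\pi_{AR})=\rho(\pi^\star_{\M^\star},\M^\star)-\rho(\pi_{AR},\M^\star)\le\rho(\pi^\star_{\M^\star},\M^\star)-\rho(\pi_B,\M^\star)=\Phi(\pi_B)$.

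\textbf{The bound $\Phi(\pi_{AR})\le b^\star$.} I first record the auxiliary estimate $\min_{P\in\mathcal U(\widehat P,e)}\rho(\pi^\star_{\M^\star},\M(P))\ge\rho(\pi^\star_{\M^\star},\M^\star)-b^\star$, which follows by applying the argument of Lemma~\ref{cor:policy_tran_error} with error bound $2e$ (valid since $\|P-P^\star\|_1\le\|P-\widehat P\|_1+\|\widehat P-P^\star\|_1\le 2e$ on $\mathcal U(\widehat P,e)$), and also from Proposition~\ref{prop:robust-vs-rewadj} together with the reward-adjustment estimate underlying Theorem~\ref{thm:perf_reward_adjusted}. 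If $\pi_{AR}=\pi_B$, then no saddle point exists; by the strong-duality identities preceding the theorem this forces the inner constrained program $\max\{\rho(\pi,\M^A_{0,1}(P')):\rho(\pi,\M^A_{1,0}(P'))\ge\rho(\pi_B,\M^\star),\ \pi\in\Pi_S^A\}$ to be infeasible for some $P'\in\mathcal U(\widehat P,e)$, i.e.\ $\max_{\pi\in\Pi_S^A}\rho(\pi,\M^A_{1,0}(P'))<\rho(\pi_B,\M^\star)$; evaluating at the lift of $\pi^\star_{\M^\star}$ that ignores $y$ gives $\rho(\pi^\star_{\M^\star},\M(P'))<\rho(\pi_B,\M^\star)$, hence $\rho(\pi^\star_{\M^\star},\M^\star)-b^\star\le\min_P\rho(\pi^\star_{\M^\star},\M(P))<\rho(\pi_B,\M^\star)$, i.e.\ $\Phi(\pi_{AR})=\Phi(\pi_B)<b^\star$. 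If instead $\pi_{AR}=\pi_0$, then since $\rho(\pi_0,\M^\star)\ge g(\pi_0)$ it suffices to show $g(\pi_0)\ge\min_{P}\rho(\pi^\star_{\M^\star},\M(P))$ and invoke the auxiliary estimate; I would obtain this from the optimality of $\pi_0$ for $\min_{P}L_P(\cdot,\lambda^\star)$, compared against the lift of $\pi^\star_{\M^\star}$, using the complementary-slackness relation from the previous step and Proposition~\ref{prop:robust-vs-rewadj} to replace the inner worst case over $P$ by the reward-adjusted MDP $\widetilde\M$, which renders the saddle point explicit.

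\textbf{The main obstacle.} The single delicate step is this last one: converting the \emph{value} of the augmented robust saddle point into a guarantee on the \emph{real-world} return $\rho(\pi_0,\M^A_{1,0}(P^\star))$ of $\pi_0$. The difficulty is that the saddle value also charges the simulator-coordinate return $\rho(\pi,\M^A_{0,1}(P))$, which the algorithm does not control, and that $\lambda^\star$ need not be bounded away from $0$, so one cannot simply divide through by $\lambda^\star$. My approach is to pass, via Proposition~\ref{prop:robust-vs-rewadj}, from the $L_1$-robust inner minimization to the linear reward-adjusted MDP of Section~\ref{subsec:reward-adjusted}, for which the Lagrangian is affine in the transition kernel and the saddle point is computable in closed form, and then to apply the reward-adjustment bound of Theorem~\ref{thm:perf_reward_adjusted}; the remaining ingredients (strong duality for constrained and for $(\X\times\A)$-rectangular robust problems, feasibility of $\pi^\star_{\M^\star}$ for~\eqref{eq:1_sim_real}, and the $L_1$ triangle inequality) are routine.
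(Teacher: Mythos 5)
Your safety argument is sound and is, if anything, more explicit than the paper's: where the paper simply notes that a lower-bounded robust dual implies (by weak duality) a feasible primal and hence a safe policy, you extract the same conclusion from the slope of the concave map $\lambda\mapsto\min_{P\in\mathcal U(\widehat P,e)}L_P(\pi_0,\lambda)$ at its finite minimizer $\lambda^\star$, which gives $g(\pi_0):=\min_{P\in\mathcal U(\widehat P,e)}\rho\big(\pi_0,\M^A_{1,0}(P)\big)\ge\rho(\pi_B,\M^\star)$ and hence safety and $\Phi(\pi_{AR})\le\Phi(\pi_B)$. Your handling of the fallback case ($\pi_{AR}=\pi_B$) is also in the right spirit and mirrors how Theorems~\ref{thm:perf_reward_adjusted} and~\ref{thm:perf_robust} justify the $\min$ in their bounds, though you should note that ``no saddle point exists'' does not by itself force the dual value to be $-\infty$ (the infimum over $\lambda$ could be finite but unattained), so that reduction to infeasibility for some $P'$ needs an extra word.

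The genuine gap is the main case $\pi_{AR}=\pi_0$ of the performance bound, which you yourself flag as the obstacle but never close. Everything hinges on $g(\pi_0)\ge\min_{P\in\mathcal U(\widehat P,e)}\rho\big(\pi^\star_{\M^\star},\M(P)\big)$, and you only say you ``would obtain'' it from optimality of $\pi_0$ at $\lambda^\star$, complementary slackness, and Proposition~\ref{prop:robust-vs-rewadj}. But optimality of $\pi_0$ at $\lambda^\star$ only says that $\pi_0$ dominates the lift of $\pi^\star_{\M^\star}$ in the mixed value $\min_P\big[\lambda^\star\rho(\cdot,\M^A_{1,0}(P))+\rho(\cdot,\M^A_{0,1}(P))\big]$, in which the real-world coordinate is weighted by $\lambda^\star$; when $\lambda^\star$ is small (in particular $\lambda^\star=0$), this dominance says essentially nothing about $g(\pi_0)$ beyond the slope condition $g(\pi_0)\ge\rho(\pi_B,\M^\star)$, which recovers the $\Phi(\pi_B)$ term of the bound but not the $\tfrac{2\gamma\rmax}{(1-\gamma)^2}\|e_{\pi^\star_{\M^\star}}\|_{1,u^\star_{\M^\star}}$ term. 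Proposition~\ref{prop:robust-vs-rewadj} lower-bounds a robust return by a reward-adjusted return for the \emph{same} policy; it does not supply the missing comparison between $\pi_0$ and $\pi^\star_{\M^\star}$ in the $x$-coordinate, so the announced route does not obviously go through. (The paper's own proof is admittedly thin at exactly this point: it restricts to $\lambda_1=1,\lambda_2=0$ and asserts the argument is ``identical'' to that of Theorem~\ref{thm:perf_robust}, which implicitly treats $\pi_{AR}$ as a maximizer of the robust $x$-return---precisely the property your obstacle paragraph correctly identifies as unavailable. You have located the real difficulty, but as written your proposal does not prove the stated bound.)
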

%

Similar to~Section \ref{subsec:robust}, compared to $\pi_S$ and the bound in~Theorem \ref{thm:perf_naive} on its performance loss,~Theorem \ref{thm:perf_robust_aug} indicates that the policy $\pi_{AR}$ returned by~Algorithm \ref{alg:robust_aug} is safe and has a tighter bound on its performance loss. However, while~Algorithm \ref{alg:robust_aug} has higher computational complexity than~Algorithm \ref{alg:reward-adjusted} and~Algorithm \ref{alg:robust},~Theorem \ref{thm:perf_robust_aug} does not show any advantage for its returned policy $\pi_{AR}$, over those returned by~Algorithm \ref{alg:reward-adjusted} and~Algorithm \ref{alg:robust}, $\pi_{RA}$ and $\pi_R$. This raises a question similar to that in~Section \ref{subsec:robust} that why should we use ~Algorithm \ref{alg:robust_aug} instead of~Algorithm \ref{alg:robust} then? Proposition~\ref{prop:robust-vs-augmented_robust} provides an answer to this question and shows whenever~Algorithm \ref{alg:robust} returns a safe policy, so does~Algorithm \ref{alg:robust_aug}, while the converse is not always true. This again resonates with the fact that with extra computational complexity, we reduce conservatism of safe policy search.

\begin{proposition}
\label{prop:robust-vs-augmented_robust}
Given Assumption~\ref{asm:error}, if $\rho(\pi_B,\M^\star)\leq\min_{\mathcal P\in\mathcal U(\widehat{P})}\rho\big(\pi_R,\M(P)\big)$, then $\min_{\mathcal P\in\mathcal U(\widehat{P})}L_P(\pi_{AR},\lambda^\star)$ is lower-bounded. This means that if Algorithm~\ref{alg:robust} returns a safe policy other than $\pi_B$, so does Algorithm~\ref{alg:robust_aug}. \hfill{\bf{\em(Appendix~\ref{appendix:robust_aug_feasibility})}}
\end{proposition}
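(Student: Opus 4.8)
The plan is to reduce the claim to one assertion: under the stated hypothesis, the value of the robust dual Lagrangian problem~\eqref{problem:LB} is bounded below. Granted this, the strong-duality equalities~\eqref{eq:strong-dual} and the chain (a)--(b) displayed just before Algorithm~\ref{alg:robust_aug}, together with the fact recalled there that a bounded dual forces the primal to be feasible, produce a saddle-point $(\pi_0,\lambda^\star)$ of~\eqref{problem:LB}; hence the ``if a saddle point solution exists'' branch of Algorithm~\ref{alg:robust_aug} is taken, $\pi_{AR}=\pi_0\ne\pi_B$, and $\min_{P\in\mathcal U(\widehat P,e)}L_P(\pi_{AR},\lambda^\star)$ equals this (finite) saddle value. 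Since the hypothesis $\rho(\pi_B,\M^\star)\le\min_{P\in\mathcal U(\widehat P,e)}\rho\big(\pi_R,\M(P)\big)$ is precisely the condition under which Algorithm~\ref{alg:robust} returns a policy $\pi_R$ other than $\pi_B$, the stated implication follows.

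To bound~\eqref{problem:LB} from below I would produce a single witness policy for the inner maximization over $\Pi_S^A$. Write $\pi_R=\pi_0=\argmax_{\pi\in\Pi_S}\min_{P\in\mathcal U(\widehat P,e)}\rho\big(\pi,\M(P)\big)$ for the robust-optimal policy appearing in the hypothesis, which is stationary and Markovian on $\X$ since $s,a$-rectangular RMDPs admit optimal stationary policies. Let $\bar\pi\in\Pi_S^A$ be its lift to the augmented state space $\X\times\X$ that ignores the $y$-coordinate, $\bar\pi(\cdot|x,y)=\pi_R(\cdot|x)$. Because the augmented reward $r^A(x,y,a)=\lambda r(x,a)+r(y,a)$ is additive and, under $\bar\pi$, the $x$-coordinate of $\M^A_{\lambda,1}(P)$ is a Markov process in its own right evolving exactly as $\M(P)$ started from $p_0$, we get
\[
\rho\big(\bar\pi,\M^A_{\lambda,1}(P)\big)=\lambda\,\rho\big(\pi_R,\M(P)\big)+\rho_y(\bar\pi,P),
\]
where $\rho_y(\bar\pi,P)$ is the discounted return accumulated on the $y$-coordinate. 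This last term is correlated with the $x$-trajectory through the shared actions and does not reduce to a standalone MDP return, but boundedness of $r$ gives $|\rho_y(\bar\pi,P)|\le\rmax/(1-\gamma)$ uniformly over $P\in\mathcal U(\widehat P,e)$ and $\lambda\ge0$.

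Substituting into the definition~\eqref{eq:lag} of $L_P$, one has $L_P(\bar\pi,\lambda)=\lambda\big(\rho(\pi_R,\M(P))-\rho(\pi_B,\M^\star)\big)+\rho_y(\bar\pi,P)$. The hypothesis makes the coefficient of $\lambda$ nonnegative for every $P\in\mathcal U(\widehat P,e)$, so $L_P(\bar\pi,\lambda)\ge-\rmax/(1-\gamma)$ for all $\lambda\ge0$ and all $P\in\mathcal U(\widehat P,e)$. Taking $\min_P$, then $\max_{\pi\in\Pi_S^A}$ (which can only raise the value, since $\bar\pi\in\Pi_S^A$), and finally $\min_{\lambda\ge0}$ yields
\[
\min_{\lambda\ge0}\ \max_{\pi\in\Pi_S^A}\ \min_{P\in\mathcal U(\widehat P,e)}L_P(\pi,\lambda)\ \ge\ -\frac{\rmax}{1-\gamma}\ >\ -\infty,
\]
which is the required lower bound. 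I expect the only real work to be the bookkeeping in the decomposition of $\rho\big(\bar\pi,\M^A_{\lambda,1}(P)\big)$ — in particular keeping track that $\rho_y$ is merely uniformly bounded rather than a clean separate return — and making the passage ``bounded dual $\Rightarrow$ saddle-point exists'' precise, which is exactly where one invokes the strong-duality structure~\eqref{eq:strong-dual} and (a)--(b) set up before Algorithm~\ref{alg:robust_aug}.
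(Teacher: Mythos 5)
Your argument is correct, but it takes a genuinely different route from the paper. The paper's proof never constructs a policy: it writes the constrained problem~\eqref{eq:1_sim_real} over the feasible set $\Pi_S^{AR}=\{\pi\in\Pi_S^A:\min_{P\in\mathcal U(\widehat P,e)}\rho(\pi,\M(P))\geq\rho(\pi_B,\M^\star)\}$, notes this set is non-empty exactly under the hypothesis (it contains $\pi_R$), compares it with the auxiliary problem $\max_{\pi\in\Pi_S^{AR}}\min_{P}\rho(\pi,\M(P))$ whose unconstrained value is precisely the quantity computed by Algorithm~\ref{alg:robust}, and then chains weak duality through both Lagrangians to obtain $\rho(\pi_B,\M^\star)\leq\min_P\rho(\pi_R,\M(P))\leq\min_{\lambda\geq0}\max_{\pi\in\Pi_S^A}\min_P L_P(\pi,\lambda)\leq\min_P L_P(\pi_{AR},\lambda^\star)$. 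You instead exhibit an explicit witness: the lift $\bar\pi(\cdot|x,y)=\pi_R(\cdot|x)$, the decomposition $L_P(\bar\pi,\lambda)=\lambda\bigl(\rho(\pi_R,\M(P))-\rho(\pi_B,\M^\star)\bigr)+\rho_y(\bar\pi,P)$ from~\eqref{eq:lag}, and the hypothesis to make the $\lambda$-coefficient nonnegative uniformly over $\mathcal U(\widehat P,e)$, giving the bound $-\rmax/(1-\gamma)$. Your route is more elementary and self-contained (no comparison of the two constrained problems, no appeal to the dominating-policy reduction beyond $\bar\pi\in\Pi_S^A$), and it isolates exactly where the hypothesis is needed (ruling out $\lambda\to\infty$ driving the dual to $-\infty$); what it gives up is sharpness: the paper's chain lower-bounds the dual by $\rho(\pi_B,\M^\star)$ itself, which mirrors the acceptance test of Algorithm~\ref{alg:robust} and dovetails with the safety argument of Theorem~\ref{thm:perf_robust_aug}, whereas your constant $-\rmax/(1-\gamma)$ only certifies boundedness, which is all the proposition literally asserts. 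Both arguments share the same unproved hinge—``bounded dual $\Rightarrow$ a saddle point of~\eqref{problem:LB} exists and Algorithm~\ref{alg:robust_aug} takes the non-$\pi_B$ branch''—which the paper also leaves at the level of the discussion preceding Algorithm~\ref{alg:robust_aug}; you flag this explicitly, so it is not a gap relative to the paper's own standard. One cosmetic point: the algorithm's test is the strict inequality $\rho_0>\rho(\pi_B,\M^\star)$ while the proposition's hypothesis is non-strict, so ``precisely the condition'' should be softened, though your witness construction works verbatim in the equality case as well.
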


\newcommand{\polrobi}{\pi_{\text{I}}}
\newcommand{\vbase}{v_{\text{B}}}
\newcommand{\qbase}{v_{\text{B}}}
\newcommand{\uset}[1]{\mathcal{U}(#1)}
\subsection{Combining Robust and Baseline Policies} \label{sec:combined}
\vspace{-0.1in}
While the robust solution described in Algorithm \ref{alg:robust} (or Algorithm \ref{alg:robust_aug}) is less conservative than Algorithm \ref{alg:reward-adjusted} it may nevertheless be overly restrictive. This is because the proposed improved policy $\pi_0$ is evaluated for the worst-case realization of the transition probability in $\mathcal{U}(\widehat{P},e)$ while the return of the baseline policy is with respect to $P\opt$. As a result, a candidate policy $\pi_0$ can be rejected even if for any realization $P\in\mathcal{U}(\widehat{P},e)$ the policy is better than the baseline. The left example in Figure \ref{fig:conservative} depicts the case in which the robust solution will be too restrictive.

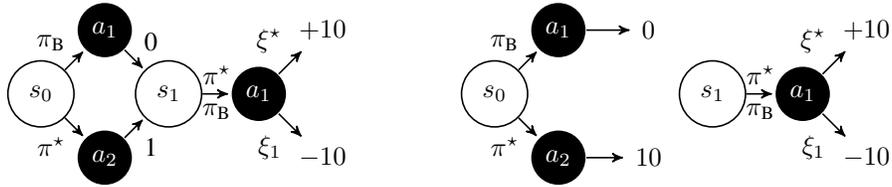
\begin{figure}[h]
\centering
\begin{tikzpicture}[->,>=stealth',shorten >=1pt,auto,node distance=1.2cm,semithick]
	\tikzstyle{action}= [circle,fill=black,text=white]

	\node[state] 		 (s1)                    														{$s_0$};
	\node[action]      (a1)         [above right of=s1]     	{$a_1$};
	\node[action]      (a2)         [below right of=s1]    {$a_2$};

	\node[state] (s2) [below right of=a1] {$s_1$};

	\node[action] (a11) [right of=s2,xshift=0mm] {$a_1$};

 	\node (o1) [above right of=a11] {$+10$};
 	\node (o2) [below right of=a11] {$-10$};

 	\path (s1) edge node [above left]{$\pi_{\text{B}}$}  (a1) ;
	\path (a1) edge node[above right] {0} (s2);
 	\path (a11) edge node [above left]{$\xi^\star$} (o1);
 	\path (a11) edge node [below left]{$\xi_1$} (o2);
 
 	\path (s1) edge node [below left]{$\pi^\star$}  (a2) ;
	\path (a2) edge node[below right] {1} (s2);

	\path (s2) edge node[above] {$\pi^\star$} node[below] {$\pi_{\text{B}}$} (a11);
\end{tikzpicture}\qquad\qquad
\begin{tikzpicture}[->,>=stealth',shorten >=1pt,auto,node distance=1.2cm,semithick]
	\tikzstyle{action}= [circle,fill=black,text=white]

	\node[state] 		 (s1)                    														{$s_0$};
	\node[action]      (a1)         [above right of=s1]     	{$a_1$};
	\node[action]      (a2)         [below right of=s1]    {$a_2$};

 	\node (o11) [right of=a1] {$0$};
 	\node (o12) [right of=a2] {$10$};

	\node[state] (s2) [below right of=o11] {$s_1$};

	\node[action] (a11) [right of=s2,xshift=0mm] {$a_1$};

 	\node (o1) [above right of=a11] {$+10$};
 	\node (o2) [below right of=a11] {$-10$};

 	\path (s1) edge node [above left]{$\pi_{\text{B}}$}  (a1) ;

	\path (a1) edge  (o11);
	\path (a2) edge  (o12);

 	\path (a11) edge node [above left]{$\xi^\star$} (o1);
 	\path (a11) edge node [below left]{$\xi_1$} (o2);

 	\path (s1) edge node [below left]{$\pi^\star$} (a2) ;

	\path (s2) edge node[above] {$\pi^\star$} node[below] {$\pi_{\text{B}}$} (a11);
\end{tikzpicture}
\caption{Left: Example in which the policy returned by~Algorithm \ref{alg:robust} is too restrictive. Right: Example in which the policy returned by Algorithm~\ref{alg:robust} performs better than the baseline policy in some states but not in others.} \label{fig:conservative}
\end{figure}
An additional limitation from Algorithm \ref{alg:reward-adjusted}, \ref{alg:robust} and \ref{alg:robust_aug} is that when the evaluation criterion of the computed policy fails, then the baseline policy is not improved at all. The restriction of these approaches is illustrated by the following counter example:

Consider a simple case when the MDP $\M$ is composed of two separate MDPs $\M_1$ and $\M_2$ and suppose that the estimated model of $\M_1$ is very good, while the model of $\M_2$ is quite imprecise. Also assume that the initial distribution is uniformly distributed between an initial state in $\M_1$ and an initial state in $\M_2$. Intuitively, the best solution would use the optimized policy for $\M_1$, which has a precise model, and use the baseline policy for $\M_2$, which has an imprecise model. However, Algorithm \ref{alg:robust} simply returns the baseline policy for both components because the return in $\M_2$ potentially reduces the quality of the robust solution. This phenomenon is illustrated in the right example of Figure \ref{fig:conservative} for which it would beneficial to combine the baseline policy with the optimized one instead of returning either one.

The above limitations can be solved by modifying the robust optimization problem. Intuitively, the objective is to find a policy that maximizes improvement over the baseline for any plausible transition probabilities. Equivalently this reduces to a distributionally robust optimization (DRO):
\begin{equation} \label{eq:objective_robust_interleave}
\polrobi \in \arg \max_\pi \min_{P \in \mathcal{U}(\widehat{P},e)} \Bigl( \return(\pi, P) - \return(\pi_B, P)\Bigr).
\end{equation}
Compared with Algorithm \ref{alg:reward-adjusted}, \ref{alg:robust} and \ref{alg:robust_aug}, the first appealing fact of this approach is that the solution policy is always safe because $\pi_B$ is feasible to \eqref{eq:objective_robust_interleave}. The second appealing fact is that it only requires the knowledge of baseline policy $\pi_B$, without the baseline performance $\rho(\pi_B,P^\star)$. Nevertheless, \eqref{eq:objective_robust_interleave} is a DRO problem which in general could be NP-hard to solve \cite{delage2010distributionally}. Here the computational methods used to solve this problem are beyond the scope of this paper; we approximate the solution using a heuristic iterative algorithm based on value iteration. The additional complexity of this formulation again corroborates with the phenomenon that additional computational complexity reduces conservatism in safe policy search. Similar to the other three formulations, the following theorem states the safety of the computed policy and describes its performance loss.
\begin{theorem}\label{thm:perf_robust_interleave}
Given that Assumption \ref{asm:error} is satisfied, then a solution $\polrobi$ to \eqref{eq:objective_robust_interleave} is safe, i.e., $\rho(\polrobi,P\opt) \ge \rho(\polbase,P\opt)$. Moreover, the performance loss of $\polrobi$ satisfies: 
\[ \Phi(\polrobi) \le \min \left\{ \frac{2 \gamma \rmax}{(1-\disc)^2}  \Big( \| e_{\pi^\star_{\M^\star}} \|_{1,u^\star_{\M^\star}}  + \| e_{\polbase} \|_{1,u_{\text{B},\M^\star}} \Big),  \Phi(\polbase) \right\} ~, \]
where $u^\star_{\M^\star}, u_{\text{B},\M^\star}$ are the normalized state occupancy frequencies of the optimal and base policies $\pi^\star_{\M^\star},\polbase$ respectively. Also, this bound is tight. \hfill{\bf{\em(Appendix \ref{appendix:robust_aug_interleave})}}
\end{theorem}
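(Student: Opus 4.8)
The plan is to (i) obtain safety directly from the feasibility of $\polbase$ in \eqref{eq:objective_robust_interleave}; (ii) read off the $\Phi(\polbase)$ branch of the bound from safety; (iii) obtain the other branch by substituting $\pi^\star_{\M^\star}$ into the outer maximization of \eqref{eq:objective_robust_interleave} and running a two-model version of Lemma~\ref{cor:policy_tran_error}; and (iv) exhibit a small MDP realizing equality. For safety, note that the feasible set of \eqref{eq:objective_robust_interleave} is all of $\Pi_S$ and that $\polbase$ attains objective value $0$ there, so the optimal value is nonnegative: $\min_{P\in\U(\widehat P,e)}\bigl(\return(\polrobi,P)-\return(\polbase,P)\bigr)\ge 0$. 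Since $P\opt\in\U(\widehat P,e)$ by Assumption~\ref{asm:error}, evaluating the minimand at $P=P\opt$ gives $\return(\polrobi,P\opt)\ge\return(\polbase,P\opt)$, which is safety; and then $\Phi(\polrobi)=\rho(\pi^\star_{\M^\star},P\opt)-\rho(\polrobi,P\opt)\le\rho(\pi^\star_{\M^\star},P\opt)-\rho(\polbase,P\opt)=\Phi(\polbase)$, the first term of the $\min$.

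For the other branch, since $\polrobi$ maximizes the objective of \eqref{eq:objective_robust_interleave}, $\pi^\star_{\M^\star}\in\Pi_S$ is a feasible (generally suboptimal) competitor, and $P\opt\in\U(\widehat P,e)$,
\[ \return(\polrobi,P\opt)-\return(\polbase,P\opt)\;\ge\;\min_{P\in\U(\widehat P,e)}\bigl(\return(\polrobi,P)-\return(\polbase,P)\bigr)\;\ge\;\min_{P\in\U(\widehat P,e)}\bigl(\return(\pi^\star_{\M^\star},P)-\return(\polbase,P)\bigr). \]
Let $\ov P\in\U(\widehat P,e)$ attain the last minimum. Rearranging and using $\Phi(\polrobi)=\rho(\pi^\star_{\M^\star},P\opt)-\return(\polrobi,P\opt)$ gives
\[ \Phi(\polrobi)\;\le\;\bigl(\return(\pi^\star_{\M^\star},P\opt)-\return(\pi^\star_{\M^\star},\ov P)\bigr)+\bigl(\return(\polbase,\ov P)-\return(\polbase,P\opt)\bigr). \]
Both $P\opt$ and $\ov P$ lie in $\U(\widehat P,e)$, so by the triangle inequality through $\widehat P$ we have $\|P\opt(\cdot|x,a)-\ov P(\cdot|x,a)\|_1\le 2e(x,a)$ for every $(x,a)$. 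Re-running the proof of Lemma~\ref{cor:policy_tran_error} for the pair $(P\opt,\ov P)$ with $2e$ in place of $e$, and anchoring the resolvent $(\eye-\disc P_\pi)^{-1}$ at $P\opt$ so the occupancy weights are those of the \emph{true} MDP, bounds the first bracket in absolute value by $\tfrac{2\disc\rmax}{(1-\disc)^2}\|e_{\pi^\star_{\M^\star}}\|_{1,u^\star_{\M^\star}}$ and the second by $\tfrac{2\disc\rmax}{(1-\disc)^2}\|e_{\polbase}\|_{1,u_{\text{B},\M^\star}}$. Adding these and taking the minimum with $\Phi(\polbase)$ yields the stated bound.

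For tightness, I would use the decoupled construction alluded to just before \eqref{eq:objective_robust_interleave}: one component in which $\pi^\star_{\M^\star}$ and $\polbase$ disagree and the $L_1$-error is concentrated so that $\polrobi$ must hedge against precisely $\ov P$, together with a second component contributing the $\|e_{\polbase}\|$ term; choosing the discount $\disc$ and the reward magnitudes appropriately turns every inequality in the argument above into an equality, so a suitable variant of the instances in Figure~\ref{fig:conservative} certifies tightness.

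The step I expect to be most delicate is the bookkeeping in the Lemma-style estimate: the resolvent must be anchored at $P\opt$ (not $\ov P$) so that the two error terms are weighted by the true-MDP occupancies $u^\star_{\M^\star}$ and $u_{\text{B},\M^\star}$ named in the statement, and the factor $2$ produced by $\|P\opt-\ov P\|_1\le 2e$ must multiply the \emph{sum} of the two weighted norms once rather than being charged separately to each. The tightness construction is conceptually simple but fiddly, the key point being to arrange the two error contributions so that they add rather than cancel.
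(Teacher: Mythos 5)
Your proposal is correct and follows essentially the same route as the paper's proof: safety from the feasibility of $\polbase$ (nonnegative optimal value, then evaluate the minimand at $P\opt\in\U(\widehat P,e)$), the $\Phi(\polbase)$ branch directly from safety, and the other branch by playing $\pi^\star_{\M^\star}$ against $\polrobi$ in the outer maximization and bounding the resulting two brackets via the $\|P-P\opt\|_1\le 2e$ triangle inequality and Lemma~\ref{lem:bound_transitions} anchored at $P\opt$, exactly as in \eqref{eq:robi_proof_upper}. The only cosmetic difference is that you keep a single minimizer $\ov P$ where the paper relaxes to $\min_P\return(\pi^\star_{\M^\star},P)-\max_P\return(\polbase,P)$ (an equivalent step), and your tightness argument is only sketched, though the paper itself settles for the two-action example of Figure~\ref{fig:example_tight_bound}.
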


\section{Numerical Comparison} \label{sec:experiments}
\vspace{-0.1in}
In this section, we numerically evaluate the proposed methods on a synthetic benchmark MDP.  The benchmark problem loosely models customer interactions with an online system. The four available actions influence the user behavior along two dimensions. Rewards, which represent user satisfactions, vary only along the first dimension. The second dimension influences only transition probabilities. To simulate a realistic source of a baseline policy, we construct it to be optimal when the second dimension of the MDP is ignored. The simulator is constructed directly from the empirical transition probabilities. The transition error $e$ is based on the sampling bounds in Section \ref{subsec:sampling_bounds} and decreases with a square root of the number of samples.  
\begin{figure}
\centering
\includegraphics[width=0.5\linewidth]{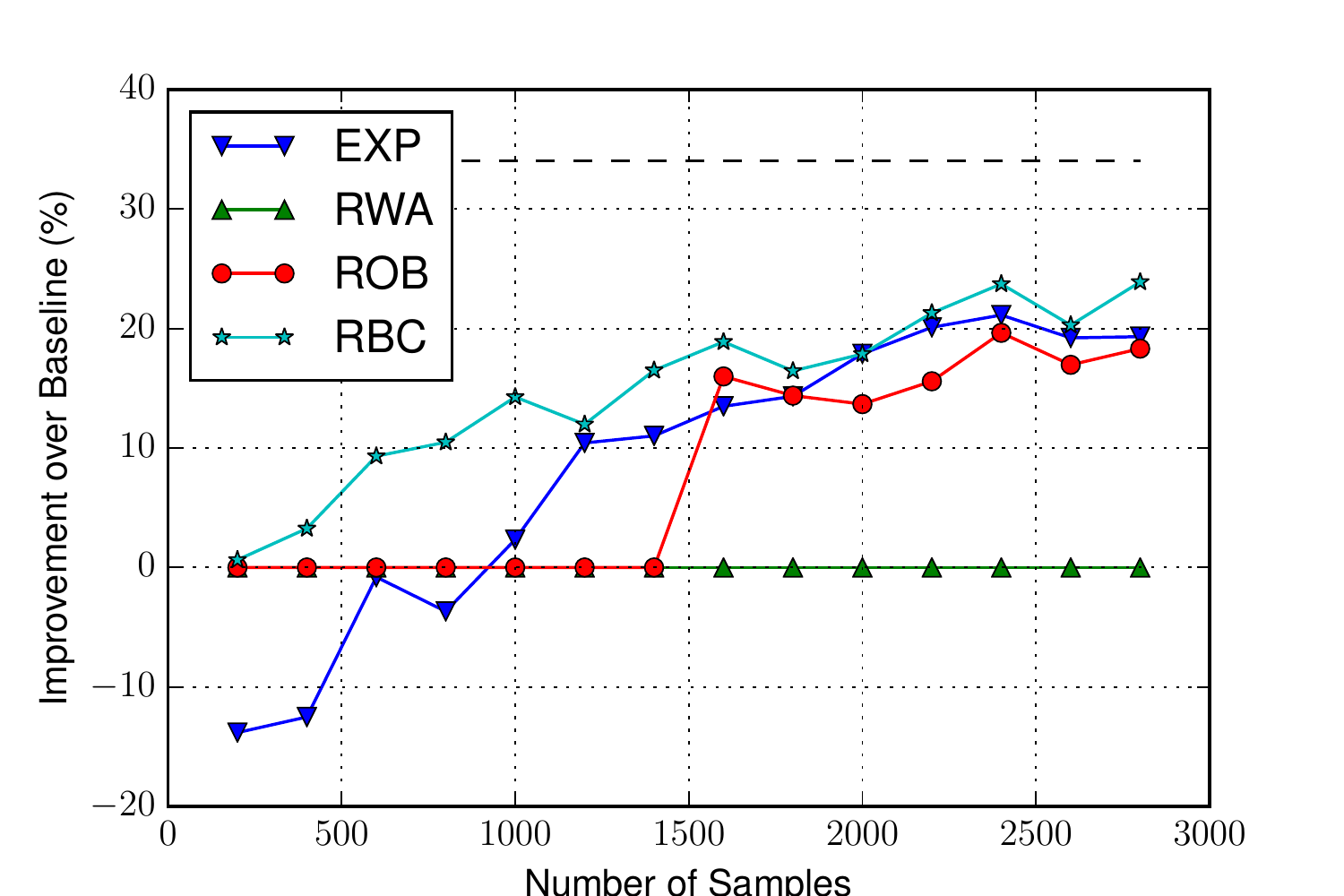}
\caption{Improvement in return over the baseline policy for the proposed methods.} \label{fig:result}
\end{figure}
Figure \ref{fig:result} depicts the percentage improvement in total return over the baseline policy as a function of the overall number of samples used in constructing the simulator. The methods used in the comparison are as follows. The dashed line shows the return of the optimal policy. EXP stands for the standard MDP method with the expected return objective. For a small number of samples, this standard method does significantly worse than the baseline policy. RWA stands for the method in Algorithm \ref{alg:reward-adjusted}, which leads to a safe policy but, as expected, is overly conservative. ROB stands for the robust method in Algorithm \ref{alg:robust}, which also guarantees the safety of returned policies, but is much less conservative than RWA. Finally, RBC is the algorithm described in Section \ref{sec:combined}. RBC optimizes the policy in states with many samples and falls back onto the baseline policy otherwise. The combined policy of RBC is not only safe, but also significantly improves on the baseline policy even when the number of samples is small. 

\section{Conclusion} \label{sec:conclusion}
\vspace{-0.1in}
In this paper we presented four model based safe policy search methods and analyzed their performance. Ranging from computational complexity to conservatism, our approaches provide a full gamut of tools to design good policies offline that match baseline performance. To the best of our knowledge, this line of work is \emph{novel} 
in the RL community. Similar approaches in the model-free setup can be found in \cite{ghavamzadeh2012conservative,kakade2002approximately,thomas2015high}, where safe policy evaluation takes place during exploration. 

On the technical side, an important future direction is to compare the performance of policies generated by different safe policy search algorithms and to explicitly study the solution algorithm in Section \ref{sec:combined}. On the experimental side, future work includes running advanced simulations in realistic domains such as battery charging/discharging control for smart grid systems.

\begin{small}
\bibliography{LTV}
\bibliographystyle{plainnat}
\end{small}

\newpage
\appendix
\onecolumn
\section{Proofs}
\subsection{Sampling Bounds} \label{subsec:sampling_bounds}

\cite{wiesemann2013robust} showed that $L_1-$deviation of the empirical distribution from the true distribution over $m$ distinct events from $n$ samples is bounded as
\begin{equation}
\label{eq:L1Bound}
\mathbb{P}\left\{||P(\cdot)-\widehat{P}(\cdot)||_1\geq \epsilon\right\} \leq (2^m-2)\exp\left(-\frac{n\epsilon^2}{2}\right).
\end{equation}
Now consider a fixed state-action pair $(x, a)$ and assume that the transition probability $\widehat{P}(\cdot|x,a)$ has been estimated using $N(x,a)$ visits to $(x,a)$. The random event for the transition probability estimate is the state to which the system transits. In this case, $m=|\mathcal{X}|$ and using~\eqref{eq:L1Bound} we may write  
\begin{equation*}
||P(\cdot|x,a)-\widehat{P}(\cdot|x,a)||_1\leq \sqrt{\frac{2}{N(x,a)}\log(\frac{2^{|\mathcal{X}|}-2}{\delta})},
\end{equation*}
with probability at least $1-\delta$. Thus, by setting $e(x,a)=\sqrt{\frac{2}{N(x,a)}\log(\frac{|\mathcal{X}||\mathcal{A}|(2^{|\mathcal{X}|}-2)}{\delta'})}$, we can guarantee that $\mathbb{P}\left\{P^\star\notin\mathcal{U}(\widehat{P})\right\}\leq\delta$. 
\subsection{Proof of Lemma~\ref{cor:policy_tran_error}}
\label{appendix:LemmaPolTranErr}

Since the return of a policy $\pi$ is the product of the initial state distribution and the value function of the policy, i.e.,~$\return(\pi,\M)=p_0^\top V^\pi_\M$, Lemma~\ref{cor:policy_tran_error} is a direct consequence of the following lemma.  

\begin{lemma} 
\label{lem:bound_transitions}
Consider two MDPs $\M_1$ and $\M_2$ that are only different in their transition probability functions $P_1$ and $P_2$, and reward functions $r_1$ and $r_2$. Let $\pi_1$ be a policy in $\M_1$ and $\pi_2$ be a policy in $\M_2$. Under the assumption that for any state $x\in\X,\;\| P_1^{\pi_1}(\cdot | x) - P_2^{\pi_2}(\cdot | x) \|_1 \le g_x$, we have
\begin{equation*}
(\eye - \disc P_1^{\pi_1})^{-1} \Bigl( r_1^{\pi_1} - r_2^{\pi_2} - \frac{\gamma \rmax}{1-\gamma} g \Bigr) \le V_{\M_1}^{\pi_1} - V_{\M_2}^{\pi_2} \le (\eye - \disc P_1^{\pi_1})^{-1} \Bigl( r_1^{\pi_1} - r_2^{\pi_2} + \frac{\gamma \rmax}{1-\gamma} g  \Bigr),
\end{equation*}
where $g$ is the vector of $g_x$'s. Moreover, the above inequalities are tight.
\end{lemma}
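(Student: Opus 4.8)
The plan is to derive the two-sided bound from the Bellman equations for $V_{\M_1}^{\pi_1}$ and $V_{\M_2}^{\pi_2}$ together with a standard telescoping/inversion argument, so I would first write
\begin{equation*}
V_{\M_1}^{\pi_1} - V_{\M_2}^{\pi_2} = r_1^{\pi_1} + \disc P_1^{\pi_1} V_{\M_1}^{\pi_1} - r_2^{\pi_2} - \disc P_2^{\pi_2} V_{\M_2}^{\pi_2},
\end{equation*}
and then add and subtract $\disc P_1^{\pi_1} V_{\M_2}^{\pi_2}$ on the right-hand side to isolate the operator $(\eye - \disc P_1^{\pi_1})$ acting on the difference:
\begin{equation*}
(\eye - \disc P_1^{\pi_1})\bigl(V_{\M_1}^{\pi_1} - V_{\M_2}^{\pi_2}\bigr) = r_1^{\pi_1} - r_2^{\pi_2} + \disc \bigl(P_1^{\pi_1} - P_2^{\pi_2}\bigr) V_{\M_2}^{\pi_2}.
\end{equation*}
Since $\disc < 1$ and $P_1^{\pi_1}$ is stochastic, $(\eye - \disc P_1^{\pi_1})^{-1}$ exists and is nonnegative (it equals the Neumann series $\sum_{t\ge 0}\disc^t (P_1^{\pi_1})^t$), so inequalities are preserved under left-multiplication by it.

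The next step is to bound the term $\disc (P_1^{\pi_1} - P_2^{\pi_2}) V_{\M_2}^{\pi_2}$ componentwise. For a fixed state $x$, the $x$-th entry is $\disc \langle P_1^{\pi_1}(\cdot|x) - P_2^{\pi_2}(\cdot|x),\, V_{\M_2}^{\pi_2}\rangle$. Because the two rows are probability distributions, their difference is orthogonal to the all-ones vector, so I can replace $V_{\M_2}^{\pi_2}$ by $V_{\M_2}^{\pi_2} - c\one$ for any constant $c$; choosing $c$ to be the midpoint of the range of the value function, and using $\|V_{\M_2}^{\pi_2}\|_\infty \le \rmax/(1-\disc)$, Hölder's inequality gives
\begin{equation*}
\bigl| \disc \langle P_1^{\pi_1}(\cdot|x) - P_2^{\pi_2}(\cdot|x),\, V_{\M_2}^{\pi_2}\rangle \bigr| \le \disc \cdot \tfrac{1}{2}\|P_1^{\pi_1}(\cdot|x) - P_2^{\pi_2}(\cdot|x)\|_1 \cdot \bigl(\max V_{\M_2}^{\pi_2} - \min V_{\M_2}^{\pi_2}\bigr) \le \tfrac{\disc \rmax}{1-\disc}\, g_x,
\end{equation*}
using the hypothesis $\|P_1^{\pi_1}(\cdot|x) - P_2^{\pi_2}(\cdot|x)\|_1 \le g_x$ and the crude range bound $\max V - \min V \le 2\rmax/(1-\disc)$ (the factor $\tfrac12$ from the recentering cancels the factor $2$). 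This yields the vector inequality $-\tfrac{\disc\rmax}{1-\disc} g \le \disc(P_1^{\pi_1} - P_2^{\pi_2})V_{\M_2}^{\pi_2} \le \tfrac{\disc\rmax}{1-\disc} g$; substituting into the displayed identity and applying the nonnegative operator $(\eye - \disc P_1^{\pi_1})^{-1}$ to both sides gives exactly the claimed two-sided bound.

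For tightness, I would construct an explicit two-state (or few-state) example: take $\M_2$ so that $V_{\M_2}^{\pi_2}$ attains the extreme values $\pm\rmax/(1-\disc)$ at two states (e.g.\ absorbing states with reward $\pm\rmax$), and choose $P_1^{\pi_1}(\cdot|x)$ and $P_2^{\pi_2}(\cdot|x)$ to differ by exactly $g_x$ in $L_1$ by shifting all the displaced mass from the ``best'' successor state to the ``worst'' one. Then the Hölder step holds with equality at every state, the recentering is exact, and the resulting inequality for $V_{\M_1}^{\pi_1} - V_{\M_2}^{\pi_2}$ becomes an equality. The main obstacle is the recentering argument in the Hölder step: one must be careful that the bound $\|V_{\M_2}^{\pi_2}\|_\infty \le \rmax/(1-\disc)$ is used in the form of a \emph{range} (oscillation) bound rather than a sup-norm bound so as not to lose a factor of two, and that the sign bookkeeping when $r_1^{\pi_1}\ne r_2^{\pi_2}$ keeps the $r_1^{\pi_1}-r_2^{\pi_2}$ term cleanly separated from the $g$ term; everything else is routine linear algebra with the resolvent $(\eye - \disc P_1^{\pi_1})^{-1}$.
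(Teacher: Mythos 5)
Your proposal is correct and follows essentially the same route as the paper: write the Bellman equations, add and subtract $\disc P_1^{\pi_1} V_{\M_2}^{\pi_2}$ to get $(\eye-\disc P_1^{\pi_1})(V_{\M_1}^{\pi_1}-V_{\M_2}^{\pi_2}) = r_1^{\pi_1}-r_2^{\pi_2}+\disc(P_1^{\pi_1}-P_2^{\pi_2})V_{\M_2}^{\pi_2}$, bound the last term via H\"older with $\|V_{\M_2}^{\pi_2}\|_\infty \le \rmax/(1-\disc)$, and apply the monotone resolvent. Your recentering-by-the-midpoint refinement yields the same constant as the paper's direct sup-norm bound, and your tightness construction is a reasonable supplement to the paper, which asserts tightness without an explicit example in this proof.
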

\begin{proof}
The difference between the two value functions can be written as follows:
\begin{align*}
V_{\M_1}^{\pi_1} - V_{\M_2}^{\pi_2} &= r_1^{\pi_1} + \gamma P_1^{\pi_1} V_{\M_1}^{\pi_1} - r_2^{\pi_2} - \gamma P_2^{\pi_2}V_{\M_2}^{\pi_2} \\
&= r_1^{\pi_1} + \gamma P_1^{\pi_1} V_{\M_1}^{\pi_1} - r_2^{\pi_2} - \gamma P_2^{\pi_2} V_{\M_2}^{\pi_2} + \gamma P_1^{\pi_1}V_{\M_2}^{\pi_2} - \gamma P_1^{\pi_1}V_{\M_2}^{\pi_2} \\
&= (r_1^{\pi_1} - r_2^{\pi_2}) + \gamma P_1^{\pi_1}(V_{\M_1}^{\pi_1} - V_{\M_2}^{\pi_2}) + \gamma (P_1^{\pi_1} - P_2^{\pi_2}) V_{\M_2}^{\pi_2} \\
&= (\eye - \gamma P_1^{\pi_1})^{-1} \left[r_1^{\pi_1} - r_2^{\pi_2} + \gamma (P_1^{\pi_1} - P_2^{\pi_2})V_{\M_2}^{\pi_2}\right].
\end{align*}
Now using the Holder's inequality, for any $x\in\X$, we have
\begin{equation*}
| \big(P_1^{\pi_1}(\cdot|x) - P_2^{\pi_2}(\cdot|x)\big)\tr V_{\M_2}^{\pi_2} | \le \| P_1^{\pi_1}(\cdot|x) - P_2^{\pi_2}(\cdot|x) \|_1 \| V_{\M_2}^{\pi_2} \|_\infty \le  g_x \| V_{\M_2}^{\pi_2} \|_\infty \le  g_x \frac{R_{\max}}{1-\gamma}.
\end{equation*}
The proof follows by uniformly bounding $(P_1^{\pi_1} - P_2^{\pi_2})V_{\M_2}^{\pi_2}$ from the above inequality and from the monotonicity of $(\eye - \gamma P_1^{\pi_1})^{-1}$.
\end{proof}


\subsection{Proof of Theorem \ref{thm:perf_naive}}
\label{appendix:ThmPerfNaive}

\begin{proof}
From Lemma~\ref{cor:policy_tran_error} with $\pi_1=\pi_2=\pi_S$, $\M_1=\M^\star$, and $\M_2=\widehat{\M}$, we have
\begin{equation*}
\return(\pi_S,\widehat{\M}) - \frac{\gamma\rmax}{1-\gamma}p_0^\top(\eye-\gamma P^\star_{\pi_S})^{-1}e_{\pi_S}\leq \return(\pi_S,\M^\star).
\end{equation*}
Thus, we may write
\begin{align*}
\Phi(\pi_S)\stackrel{\Delta}{=}\return(\pi^\star_{\M^\star},\M^\star) - \return(\pi_S,\M^\star) &\le \return(\pi^\star_{\M^\star},\M^\star) - \return(\pi_S,\widehat{\M}) + \frac{\gamma \rmax}{1-\disc} \indist\tr (\eye - \disc P^\star_{\pi_S})^{-1} e_{\pi_S} \\
&\stackrel{\text{(a)}}{\le} \return(\pi^\star_{\M^\star},\M^\star) - \return(\pi^\star_{\M^\star},\widehat{\M}) + \frac{\gamma \rmax}{1-\disc} \indist\tr (\eye - \disc P^\star_{\pi_S})^{-1} e_{\pi_S} \\
&\stackrel{\text{(b)}}{\le} \frac{\gamma \rmax}{1-\disc} \indist\tr \left[(\eye - \disc P^\star_{\pi^\star_{\M^\star}})^{-1}e_{\pi^\star_{\M^\star}} + (\eye - \disc P^\star_{\pi_S})^{-1}e_{\pi_S}\right] \\
&\stackrel{\text{(c)}}{\le} \frac{2\gamma \rmax}{(1-\disc)^2}\| e \|_\infty.
\end{align*}
{(a)} Comes from the optimality of $\pi_S$ in $\widehat{\M}$.\\
{(b)} This is the application of Lemma~\ref{cor:policy_tran_error} with policy $\pi_1=\pi_2=\pi^\star_{\M^\star}$, $\M_1=\M^\star$, and $\M_2=\widehat{\M}$. \\
{(c)} This is from the fact that for any policy $\pi$, we have $\| \indist\tr (\eye - \disc P^\star_\pi)^{-1} \|_1 = 1/(1-\gamma)$, and from the application of the Holder's inequality.
\end{proof}


\subsection{Proof of Theorem \ref{thm:perf_reward_adjusted}}
\label{appendix:ThmPerfRewAdj}

\begin{proof}
To prove the safety of $\pi_{Ra}$ and bound its performance loss, we need to upper and lower bound the difference between the performance of any policy $\pi$ in the true MDP $\M^\star$ and its performance in $\widetilde{M}$, i.e.,~$\return(\pi,\M^\star)-\return(\pi,\widetilde{\M})$. These upper and lower bounds are obtained by applying Lemma~\ref{lem:bound_transitions} with $\pi_1=\pi_2=\pi$, $\M_1=\M^\star$, and $\M_2=\widetilde{\M}$ as follows
\begin{equation}
\label{eq:AppC1}
\return(\pi,\M^\star) - \return(\pi,\widetilde{\M}) \ge p_0\tr (\eye - \gamma P^\star_\pi)^{-1} \left(r^\pi - \widehat{r}^\pi - \frac{\gamma \rmax}{1-\gamma} e_\pi \right) \ge 0,
\end{equation}
where the second inequality in~\eqref{eq:AppC1} follows from the definition of the adjusted reward function $\widehat{r}$, and the fact that $(\eye - \gamma P^\star_\pi)^{-1}$ is monotone and $p_0$ is non-negative. Similarly, the upper-bound is
\begin{equation}
\label{eq:exp_proof_lower}
\return(\pi,\M^\star) - \return(\pi,\widetilde{\M}) \le \frac{2\gamma \rmax}{1-\gamma} p_0\tr (\eye - \gamma P^\star_\pi)e_\pi = \frac{2\gamma \rmax}{(1-\gamma)^2} \, \| e_\pi \|_{1,u^\pi_{\M^\star}},    
\end{equation}
where $u^\pi_{\M^\star}= (1-\gamma) p_0^\top(\eye - \gamma P^\star_{\pi^\star})^{-1}$ is the normalized state occupancy frequency of policy $\pi$ in the true MDP $\M^\star$.

To prove the safety of the returned policy $\pi_{Ra}$, consider the two cases on Line~\ref{ln:algexp_condition} of Algorithm~\ref{alg:reward-adjusted}. When the condition is satisfied, we have $\return(\pi_B,\M^\star) < \return(\pi_0,\widetilde{\M}) \le \return(\pi_0,\M^\star)$, where the second inequality comes from~\eqref{eq:AppC1}, and thus, the policy $\pi_{Ra}=\pi_0$ is safe. When the condition is violated, then $\pi_{Ra}$ is simply $\pi_B$, which is safe by definition.


To derive a bound on the performance loss of the returned policy $\pi_{Ra}$, consider also the two cases on Line~\ref{ln:algexp_condition} of Algorithm~\ref{alg:reward-adjusted}. When the condition is satisfied, using~\eqref{eq:AppC1}, we have
\begin{equation*}
\Phi(\pi_{Ra}) = \return(\pi^\star_{\M^\star},\M^\star) - \return(\pi_0,\M^\star) \le \return(\pi^\star_{\M^\star},\M^\star) - \return(\pi_0,\widetilde{\M}),
\end{equation*}
and when the condition is violated, we have 
\begin{equation*}
\Phi(\pi_{Ra}) = \return(\pi^\star_{\M^\star},\M^\star) - \return(\pi_B,\M^\star).
\end{equation*}
Since the condition is satisfied on Line~\ref{ln:algexp_condition} of Algorithm~\ref{alg:reward-adjusted} when $\return(\pi_0,\widetilde{\M})>\return(\pi_B,\M^\star)$, we may write 
\begin{equation*}
\Phi(\pi_{Ra}) \le \min\left\{\return(\pi^\star_{\M^\star},\M^\star) - \return(\pi_0,\widetilde{\M})\;,\;\return(\pi^\star_{\M^\star},\M^\star) - \return(\pi_B,\M^\star)\right\}.
\end{equation*}
Note that we may write the following inequalities for the first term in the minimum
\begin{equation*}
\return(\pi^\star_{\M^\star},\M^\star) - \return(\pi_0,\widetilde{\M}) \stackrel{\text{(a)}}{\leq} \return(\pi^\star_{\M^\star},\M^\star) - \return(\pi^\star_{\M^\star},\widetilde{\M}) \stackrel{\text{(b)}}{\le} \frac{2\gamma \rmax}{(1-\gamma)^2} \, \| e_{\pi^\star_{\M^\star}} \|_{1,u^\star_{\M^\star}},
\end{equation*}
where {\bf (a)} follows from $\pi_0$ being an optimal policy of MDP $\widetilde{\M}$, {\bf (b)} is from~\eqref{eq:exp_proof_lower} with $\pi=\pi^\star_{\M^\star}$, and $u^\star_{\M^\star}$ is the normalized state occupancy frequency of the optimal policy $\pi^\star_{\M^\star}$. This proves the theorem.
\end{proof}

\subsection{Proof of Theorem \ref{thm:perf_robust}}
\label{appendix:ThmPerfRobust}

\begin{proof}
To prove the safety of $\pi_R$ and bound its performance loss, we need to upper and lower bound the difference between the performance of any policy $\pi$ in the true MDP $\M^\star$ and its worst-case performance, $\min_{P\in\mathcal{U}(\widehat{P},e)} \return\big(\pi,\M(P)\big)$. Since $P^\star\in\mathcal{U}(\widehat{P},e)$ from Assumption~\ref{asm:error}, we have
\begin{equation} 
\label{eq:rob_proof_upper}
\min_{P\in\mathcal{U}(\widehat{P},e)}\return\big(\pi,\M(P)\big) \le \return(\pi,\M^\star).
\end{equation} 
Now let $\bar{P}$ be the minimizer in $\min_{P\in\mathcal{U}(\widehat{P},e)}\return\big(\pi,\M(P)\big)$. The minimizer exists because of the continuity and compactness of the uncertainty set. From Assumption~\ref{asm:error} and the construction of $\mathcal{U}(\widehat{P},e)$, for any $(x,a)\in\X\times\A$, we have
\begin{align*}
\| \bar{P}(\cdot|x,a) - P^\star(\cdot|x,a) \|_1 &\le \| \bar{P}(\cdot|x,a) - \widehat{P}(\cdot|x,a) + \widehat{P}(\cdot|x,a) + P^\star(\cdot|x,a) \|_1 \\
&\le \| \bar{P}(\cdot|x,a) - \widehat{P}(\cdot|x,a) \|_1 + \| \widehat{P}(\cdot|x,a) + P^\star(\cdot|x,a) \|_1  \\
&\le 2 e(x,a).
\end{align*}
Applying Lemma~\ref{lem:bound_transitions} with $\pi_1=\pi_2=\pi$, $\M_1=\M^\star$, and $\M_2=\M(\bar{P})$, we obtain
\begin{equation}
\label{eq:rob_proof_lower}
\return(\pi,\M^\star) - \min_{P\in\mathcal{U}(\widehat{P},e)}\return\big(\pi,\M(P)\big) \le \frac{2\gamma \rmax}{1-\gamma} \, p_0\tr (\eye - \gamma P^\star_\pi)^{-1}e_\pi = \frac{2\gamma \rmax}{(1-\gamma)^2} \, \| e_\pi \|_{1,u^\pi_{\M^\star}},    
\end{equation}
where $\pi^\pi_{\M^\star}= (1-\gamma) p_0^\top(\eye - \gamma P^\star_\pi)^{-1}$ is the normalized state occupancy frequency of policy $\pi$ in the true MDP $\M^\star$.

To prove the safety of the returned policy $\pi_{Ra}$, consider the two cases on Line~\ref{ln:algrob_condition} of Algorithm~\ref{alg:robust}. When the condition is satisfied, we have $\return(\pi_B,\M^\star)<\min_{P\in\mathcal{U}(\widehat{P},e)}\return\big(\pi_0,\M(P)\big)\leq\return(\pi_0,\M^\star)$, where the second inequality comes from~\eqref{eq:rob_proof_upper}, and thus, the policy $\pi_R=\pi_0$ is safe. When the condition is violated, then $\pi_R$ is simply $\pi_B$, which is safe by definition. 

To derive a bound on the performance loss of the returned policy $\pi_R$, consider also the two cases on Line~\ref{ln:algrob_condition} of Algorithm~\ref{alg:robust}. When the condition is satisfied, using~\eqref{eq:rob_proof_upper}, we have
\[ \Phi(\pi_R) = \return(\pi^\star_{\M^\star},\M^\star) - \return(\pi_0,\M^\star) \le \return(\pi^\star_{\M^\star},\M^\star) - \min_{P\in\mathcal{U}(\widehat{P},e)}\return\big(\pi_0,\M(P)\big)~, \]
and when the condition is violated, we have 
\[ \Phi(\pi_R) = \return(\pi^\star_{\M^\star},\M^\star) - \return(\pi_B,\M^\star)~. \]

Since the condition is satisfied on Line~\ref{ln:algrob_condition} of Algorithm~\ref{alg:robust} when $\min_{P\in\mathcal{U}(\widehat{P},e)}\return\big(\pi_0,\M(P)\big)>\return(\pi_B,\M^\star)$, we may write 
\[  \Phi(\pi_R) \le \min\left\{\return(\pi^\star_{\M^\star},\M^\star) - \min_{P\in\mathcal{U}(\widehat{P},e)}\return\big(\pi_0,\M(P)\big) ~ \return(\pi^\star_{\M^\star},\M^\star) - \return(\pi_B,\M^\star)\right\}~. \]
Note that we may write the following inequalities for the first term in the minimum
\begin{align*}
\return(\pi^\star_{\M^\star},\M^\star) - \min_{P\in\mathcal{U}(\widehat{P},e)}\return\big(\pi_0,\M(P)\big) 
&\stackrel{\text{(a)}}{\leq} \return(\pi^\star_{\M^\star},\M^\star) - \min_{P\in\mathcal{U}(\widehat{P},e)}\return\big(\pi^\star_{\M^\star},\M(P)\big) \\
&\stackrel{\text{(b)}}{\le} \frac{2\gamma \rmax}{(1-\gamma)^2} \, \| e_{\pi^\star_{\M^\star}} \|_{1,u^\star_{\M^\star}},
\end{align*}
where {\bf (a)} follows from $\pi_0$ being the maximizer in solving the robust MDP, {\bf (b)} is from~\eqref{eq:rob_proof_lower} with $\pi=\pi^\star_{\M^\star}$, and $u^\star_{\M^\star}$ is the normalized state occupancy frequency of the optimal policy $\pi^\star_{\M^\star}$. 
\end{proof}

\subsection{Proof of Proposition \ref{prop:robust-vs-rewadj} } \label{appendix:prop-robust-reward}

\begin{proof}
Let $\bar{P}$ be the minimizer of $\min_{P\in\mathcal{U}(\widehat{P},e)} \return\big(\pi,\M(P)\big)$. Then from Lemma \ref{lem:bound_transitions} for each $\pi$:
\begin{align*}
\rho\big(\pi,\M(\bar{P})\big) \ge \rho(\pi,\widehat{\M}) - \frac{\gamma \rmax}{1-\gamma} p_0\tr (\eye - \gamma \widehat{P}_\pi)^{-1} e_\pi = \rho(\pi,\widetilde{\M}),
\end{align*}
 The last inequality holds because $\widetilde{\M}$ is differs from $\widehat{\M}$ only in its reward function $\hat{r}^\pi=r^\pi-\frac{\gamma\rmax}{1-\gamma}e_\pi$ (see \eqref{eq:rhat}).
\end{proof}


\subsection{The Saddle Point Solution Algorithm}
\label{appendix:saddle_point}

Now we turn to the solution algorithm of \eqref{problem:LB}. Notice that for every fixed $\lambda\geq 0$, the inner optimization problem in \eqref{problem:LB} is a robust MDP problem with $(s,a)-$rectangular uncertainty. We now summarize the standard results of robust value iteration from \cite{nilim2005robust,iyengar2005robust}. 
For any $(x,y)\in\mathcal X\times\mathcal X$, define
the $\lambda-$parametrized robust Bellman's operator $\mathcal T_{\lambda}:\reals^{|\mathcal X\times\mathcal X|}\rightarrow\reals^{|\mathcal X\times\mathcal X|}$ as follows:
\[ 
\mathcal T_{\lambda}[V](x,y)=\max_{a\in\mathcal A}\left\{r^C_{\lambda}(x,y,a)+\gamma\min_{P\in\mathcal U(\widehat P,e)}\sum_{(x^\prime,y^\prime)\in\mathcal X\times\mathcal X}\mathcal P^C(x^\prime,y^\prime|x,y,a)V(x^\prime,y^\prime)\right\}. 
\]
Since the robust Bellman operator is a contraction mapping, its unique fixed point solution equals to the optimal robust Lagrangian function, i.e., 
$\mathcal V_\lambda(x,y)=\mathcal T_\lambda[\mathcal V_\lambda](x,y)$, $\,\forall (x,y)\in\mathcal X\times\mathcal X$ and
\begin{equation}\label{eq:fixed_point_aug}
\sum_{x,x\in\mathcal X}p_0(x)p_0(x)\mathcal V_{\lambda}(x,y)=\min_{P\in\mathcal U(\widehat P,e)}\max_{\pi\in\Pi_S^C} L(\mathcal P^C,\pi,\lambda).
\end{equation}
Thus the robust value function $\mathcal V_{\lambda}(x,y)$ can be calculated by robust value iteration \cite{nilim2005robust},
\[
\mathcal V_{\lambda,0}(x,y)=\mathcal V_{0}(x,y),\,\,\mathcal V_{\lambda,N+1}(x,y)=\mathcal T_{\lambda}[\mathcal V_{\lambda,N}](x,y),\,\,\forall (x,y)\in\mathcal X\times\mathcal X,\,\, N\in\{0,1,2,\ldots,\},
\]
where the $\lambda-$parametrized optimal policy $\pi^{*}_{\lambda}:\mathcal X\times\mathcal X\rightarrow\mathcal A$ and worst case transition probability $ P^\star$ form a \emph{minimax} saddle point of the fixed point solution for $\lambda\geq 0$ \footnote{Note that $\lambda$ is a linear function of $\min_{P\in\mathcal U(\widehat P,e)} L(\pi,\lambda)$ and the worst case minimization is only taken with respect to the constraint component. Then $ P^\star$ is independent of $\lambda$.}.
Furthermore, the following sub-gradient descent algorithm finds the Lagrange multiplier of problem \eqref{problem:LB}:
\begin{enumerate}
\item Find the following Lagrange multiplier update:
\begin{equation*}
\lambda^{(j+1)}=\left(\lambda^{(j)}-\alpha^{(j)} \left(
\return( \pi^{*}_{\lambda^{(j)}},\M(P^\star))-\mathcal \return(\pi_B,\M^\star)
\right)\right)^+
\end{equation*}
 where the step length $\alpha^{(j)}$ is non-summable, square-summable. \footnote{The step-size condition satisfies $\alpha^{(j)}\geq 0$, $\sum_{j=0}^\infty\alpha^{(j)}=\infty$ and $\sum_{j=0}^\infty\left(\alpha^{(j)}\right)^2<\infty$.}
\item Define $f_{\text{min}}^{(j+1)}=\min(f_{\text{min}}^{(j)},f(\lambda^{(j+1)}))$ as the best dual function estimate where $f:\reals_{\geq 0}\rightarrow\reals$ is the robust Lagrangian function at $\pi=\nu_{\lambda}$, i.e.,
\[
f(\lambda)=\max_{\pi\in\Pi_S^C}\min_{P\in\mathcal U(\widehat P,e)} L(\pi,\lambda).
\]
Update the Lagrange multiplier estimate as follows: 
\[
\lambda^{(j+1)}\leftarrow\left\{\begin{array}{ll}
\lambda^{(j+1)}&\text{if  $f_{\text{min}}^{(j+1)}=f(\lambda^{(j+1)})$}\\
\lambda^{(j)}&\text{otherwise}
\end{array}\right..
\]
\end{enumerate}
The following Lemma shows that the solution of the above procedure converges to the solution of $\min_{\lambda\geq 0}f(\lambda)$.
\begin{lemma}\label{lem:tech2}
Let $\lambda^\star$  be the minimizer of $\min_{\lambda\geq 0}f(\lambda)$. The solution of the projected sub-gradient descent algorithm converges to $\lambda^\star$, i..e, $\lim_{j\rightarrow\infty}\lambda^{(j)}=\lambda^\star$.
\end{lemma}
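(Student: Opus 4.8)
The plan is to recognize the sub-gradient procedure as the projected sub-gradient method for the convex program $\min_{\lambda\ge 0}f(\lambda)$ and then invoke the standard convergence guarantee for step sizes that are non-summable but square-summable; the real work is in verifying the three hypotheses of that guarantee, and I would begin with convexity of $f$. Using the $(\X\times\A)$-rectangular strong duality recalled in point (a) above together with the observation in the footnote above that the minimax saddle-point transition probability $P^\star\in\mathcal U(\widehat P,e)$ does not depend on $\lambda$, one can write $f(\lambda)=\min_{P\in\mathcal U(\widehat P,e)}\max_{\pi\in\Pi_S^A}L_P(\pi,\lambda)=\max_{\pi\in\Pi_S^A}L_{P^\star}(\pi,\lambda)$. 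For each fixed stationary policy $\pi$ the map $\lambda\mapsto L_{P^\star}(\pi,\lambda)=\rho\big(\pi,\M^A_{\lambda,1}(P^\star)\big)-\lambda\,\rho(\pi_B,\M^\star)$ is affine, because the augmented transition matrix $P^A_\pi$ (hence $(\eye-\disc P^A_\pi)^{-1}$) is independent of $\lambda$ while the augmented reward $r^A(x,y,a)=\lambda r(x,a)+r(y,a)$ is affine in $\lambda$ and the return is linear in the reward. Restricting the maximization to the finitely many deterministic stationary policies of the finite augmented MDP (which suffices), $f$ is a pointwise maximum of finitely many affine functions, hence convex and piecewise linear, with a nonempty minimizing set over $\reals_{\ge0}$; write $\lambda^\star$ for its minimizer.

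Next I would check that the update is a projected sub-gradient step with bounded steps. Let $\pi^{*}_{\lambda^{(j)}}$ attain $\max_{\pi\in\Pi_S^A}L_{P^\star}(\pi,\lambda^{(j)})$. By the sub-gradient rule for a pointwise maximum of affine functions, the slope of $L_{P^\star}(\pi^{*}_{\lambda^{(j)}},\cdot)$ at $\lambda^{(j)}$ lies in $\partial f(\lambda^{(j)})$, and a direct computation identifies this slope with $g^{(j)}:=\rho\big(\pi^{*}_{\lambda^{(j)}},\M(P^\star)\big)-\rho(\pi_B,\M^\star)$ --- exactly the direction appearing in the Lagrange-multiplier update. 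Hence $\lambda^{(j+1)}=\big(\lambda^{(j)}-\alpha^{(j)}g^{(j)}\big)^+$ is one step of the projected sub-gradient method onto $\reals_{\ge0}$. The sub-gradients are uniformly bounded: since $\|p_0^{A\top}(\eye-\disc P^A_\pi)^{-1}\|_1=1/(1-\disc)$ and $|r|\le\rmax$, one gets $|g^{(j)}|\le 2\rmax/(1-\disc)=:G$ for every $j$.

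Finally I would conclude by the textbook argument. Nonexpansiveness of the projection onto $\reals_{\ge0}$ and the sub-gradient inequality give $\|\lambda^{(j+1)}-\lambda^\star\|^2\le\|\lambda^{(j)}-\lambda^\star\|^2-2\alpha^{(j)}\big(f(\lambda^{(j)})-f(\lambda^\star)\big)+\big(\alpha^{(j)}\big)^2G^2$; square-summability of $\{\alpha^{(j)}\}$ makes $\|\lambda^{(j)}-\lambda^\star\|$ bounded and forces $\lim_j\|\lambda^{(j)}-\lambda^\star\|$ to exist, while non-summability forces $\liminf_j f(\lambda^{(j)})=f(\lambda^\star)$; hence along a subsequence $\lambda^{(j_k)}\to\lambda^\star$, so the already-existing limit of $\|\lambda^{(j)}-\lambda^\star\|$ must be $0$, i.e.\ $\lambda^{(j)}\to\lambda^\star$ (see, e.g., \cite{boyd2004convex}). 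The bookkeeping with $f^{(j)}_{\min}$ and the accept/reject rule only makes $f(\lambda^{(j)})$ non-increasing along accepted steps (the monotone variant of the method) and does not affect the argument. The hard part will be the first step: convexity of $f$ genuinely relies on the worst-case transition probability being the same for every $\lambda\ge 0$, since a pointwise infimum over $P$ of the (convex-in-$\lambda$) maps $\lambda\mapsto\max_\pi L_P(\pi,\lambda)$ need not be convex in general; once convexity and the uniform bound on the sub-gradients are in hand, the rest is routine.
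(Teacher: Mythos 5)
Your proof is correct and follows essentially the same route as the paper's: convexity of $f$ in $\lambda$ (resting on the $\lambda$-independence of the worst-case transition probability), identification of the multiplier update as a projected sub-gradient step, and the standard nonexpansive-projection argument with non-summable, square-summable step sizes. You are in fact slightly more careful than the paper at three points—the max-of-affine convexity argument, the explicit sub-gradient bound $|g^{(j)}|\le 2\rmax/(1-\gamma)$, and the passage from convergence of the best function value to convergence of the iterates $\lambda^{(j)}$—steps the paper handles only implicitly (via the envelope theorem and a concluding assertion).
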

\begin{proof}
Since $ L(\pi,\lambda)$ is a linear  function of $\lambda$ for any given $\mathcal P^C$ and $\pi$, we have that $f(\lambda)=\max_{\pi\in\Pi_S^C}\min_{P\in\mathcal U(\widehat P,e)}  L(\pi,\lambda)$ is a convex function in $\lambda$.
By the envelop theorem of mathematical economics \cite{milgrom2002envelope}, for $\lambda \geq 0$ one can write
\[
\frac{d f(\lambda)}{d\lambda}=\min_{P\in\mathcal U(\widehat P,e)}\return(\pi,\M(P))-\mathcal \return(\pi_B,\M^\star).
\]

Now, we show that the proposed sub-gradient descent algorithm converges to the optimizer of  $\min_{\lambda\geq 0}f(\lambda)$. First, one can write the $\lambda$ iterate as $\lambda^{(j+1)}=\left(\bar\lambda^{(j+1)}\right)^+$ where
\[
\bar\lambda^{(j+1)}:=\lambda^{(j)}-\alpha^{(j)}\frac{d f(\lambda)}{d\lambda}\bigg\vert_{\lambda=\lambda^{(j)}}.
\]
Since the projection operator is non-expansive, one obtains $(\lambda^{(j+1)}-\lambda^\star)^2\leq(\bar\lambda^{(j+1)}-\lambda^\star)^2$. 
Furthermore, the following expression holds:
\[
\begin{split}
(\lambda^{(j+1)}-\lambda^\star)^2\leq &(\bar\lambda^{(j+1)}-\lambda^\star)^2\\
=&\left(\lambda^{(j)}-\alpha^{(j)}\frac{d f(\lambda)}{d\lambda}\bigg\vert_{\lambda=\lambda^{(j)}}-\lambda^\star\right)^2\\
=&(\lambda^{(j)}-\lambda^\star)^2-2\alpha^{(j)}(\lambda^{(j)}-\lambda^\star)\frac{d f(\lambda)}{d\lambda}\bigg\vert_{\lambda=\lambda^{(j)}}+\left(\alpha^{(j)}\right)^2\left(\frac{d f(\lambda)}{d\lambda}\bigg\vert_{\lambda=\lambda^{(j)}}\right)^2\\
\leq &(\lambda^{(j)}-\lambda^\star)^2-2\alpha^{(j)}(f(\lambda^{(j)})-f(\lambda^\star))+\left(\alpha^{(j)}\right)^2\left(\frac{d f(\lambda)}{d\lambda}\bigg\vert_{\lambda=\lambda^{(j)}}\right)^2,
\end{split}
\]
The inequality is due to the fact that for any $\lambda\geq 0$, convexity of $f(\lambda)$ implies that
\[
f(\lambda)-f(\lambda^\star)\geq (\lambda-\lambda^\star)\frac{d f(\lambda)}{d\lambda}.
\]
This further implies
\[
(\lambda^{(j+1)}-\lambda^\star)^2\leq (\lambda^{(0)}-\lambda^\star)^2-\sum_{q=0}^j2\alpha^{(q)}(f(\lambda^{(q)})-f(\lambda^\star))+\left(\alpha^{(q)}\right)^2\left(\frac{d f(\lambda)}{d\lambda}\bigg\vert_{\lambda=\lambda^{(q)}}\right)^2.
\]
Since $(\lambda^{(j+1)}-\lambda^\star)^2$ is a positive quantity and $(\lambda^{(0)}-\lambda^\star)^2$ is bounded, this further implies
\[
2\sum_{q=0}^j\alpha^{(q)}(f(\lambda^{(q)})-f(\lambda^\star))\leq (\bar\lambda^{(0)}-\lambda^\star)^2+\sum_{q=0}^j\left(\alpha^{(q)}\right)^2\left(\frac{d f(\lambda)}{d\lambda}\bigg\vert_{\lambda=\lambda^{(q)}}\right)^2.
\]
By defining $f_{\text{min}}^{(j)}=\min_{q\in\{0,\ldots,j\}}f(\lambda^{(q)})$, the above expression implies
\[
f_{\text{min}}^{(j)}-f(\lambda^\star)\leq \frac{1}{\sum_{q=0}^j\alpha^{(q)}}\left((\lambda^{(0)}-\lambda^\star)^2+\sum_{q=0}^j\left(\alpha^{(q)}\right)^2\left(\frac{d f(\lambda)}{d\lambda}\bigg\vert_{\lambda=\lambda^{(q)}}\right)^2\right).
\]
The step-size rule of $\alpha^{(q)}$ ensures that the numerator is bounded and the denominator goes to infinity as $j\rightarrow\infty$.
This implies that for any $\epsilon>0$, there exists a constant $N(\epsilon)$ such that for any $j>N(\epsilon)$, $f(\lambda^\star) \leq f_{\text{min}}^{(j)}\leq f(\lambda^\star)+\epsilon$.
In other words, the sequence $\lambda^{(j)}$ converges to the global minimum $\lambda^\star$ of $f(\lambda)$. 
\end{proof}
Combining all previous arguments, the saddle point solution of \eqref{problem:LB} is given by $(\pi_\lambda^\star,\lambda^\star)$.


\subsection{Proof of Theorem \ref{thm:perf_robust_aug}}
\label{appendix:robust_aug_perf}

\begin{proof}
Given Assumption~\ref{asm:error}, if the solution $\min_{\lambda\geq 0}\max_{\pi\in\Pi_S^A}\min_{P\in\mathcal U(\widehat{P},e)}L_P(\pi,\lambda)$ is lower-bounded, then weak duality implies that the primal Lagrangian $\max_{\pi\in\Pi_S^A}\min_{\lambda\geq 0} L(\pi,\lambda)$ is also lower-bounded, which further implies that $\pi_{AR}$ is a {\em safe} policy. Otherwise, Algorithm~\ref{alg:robust_aug} returns the baseline policy $\pi_B$. This concludes that the policy $\pi_{AR}$ returned by Algorithm~\ref{alg:robust_aug} is safe.

For the performance loss bound, without loss of generality it is analyzed based on MDP $\M^A_{\lambda_1,\lambda_2}(P)$ with $\lambda_1 = 1$ and $\lambda_2 =0$, where $\M^A_{1,0}(P)=\M(P)$ and $\M^A_{1,0}(P^\star)=\M^\star$. The proof follows with arguments identical to those in the proof of Theorem~\ref{thm:perf_robust} and is omitted for the sake of brevity. 
\end{proof}


\subsection{Proof of Proposition~\ref{prop:robust-vs-augmented_robust}}
\label{appendix:robust_aug_feasibility}

\begin{proof}
Suppose that Algorithm~\ref{alg:robust} returns a safe policy other than $\pi_B$, i.e.,~$\pi_R\neq\pi_B$. Since the class of policies $\Pi_S^A$ is dominating for the optimization problem~\eqref{eq:1_sim_real}, we may write~\eqref{eq:1_sim_real} as 
\begin{equation}\label{problem:ori}
\max_{\pi\in\Pi_S^{AR}} \return( \pi,\widehat{\M}),
\end{equation}
where the feasible policy set $\Pi_S^{AR}$ is defined as
\begin{equation*}
\Pi_S^{AR}:=\Big\{\pi\in\Pi_S^A:\min_{P\in\mathcal U(\widehat P,e)} \return\big(\pi,\M(P)\big)\geq \return(\pi_B,\M^\star)\Big\}.
\end{equation*}
This leads to the primal Lagrangian formulation $\max_{\pi\in\Pi_S^A} \min_{\lambda\geq 0}\min_{P\in\mathcal U(\widehat P,e)}L_P(\pi,\lambda)$. Since $\pi_R\neq\pi_B$ is a safe policy and the feasible set $\Pi_S^{AR}$ is non-empty, the solution of the primal Lagrangian is equal to the solution of~\eqref{problem:ori}. Furthermore by weak duality, we have 
\begin{equation*}
\max_{\pi\in\Pi_S^A} \min_{\lambda\geq 0}\min_{P\in\mathcal U(\widehat P,e)}L_P(\pi,\lambda) \leq\underbrace{\min_{\lambda\geq 0}\max_{\pi\in\Pi_S^A}\min_{P\in\mathcal U(\widehat P,e)}L_P(\pi,\lambda)}_{\text{solution of Algorithm~\ref{alg:robust_aug}}}.
\end{equation*}

On the other hand, consider the following optimization problem:
\begin{equation}
\label{robust_pro_2}
\max_{\pi\in\Pi_S^{AR}}\min_{\P\in\mathcal U(\widehat P,e)} \return\big(\pi,\M(P)\big).
\end{equation} 
When $\pi_R\neq\pi_B$ is a safe policy, the feasible set $\Pi_S^{AR}$ is non-empty, and we may write the Lagrangian function of~\eqref{robust_pro_2} as
\begin{equation*}
\mathcal L(\pi,\lambda)=\min_{\P\in\mathcal U(\widehat P,e)} \return\big(\pi,\M(P)\big)+\lambda\left(\min_{\P\in\mathcal U(\widehat P,e)} \return\big(\pi,\M(P)\big)-\return(\pi_B,\M^\star)\right)
\end{equation*}
Since the feasible set is non-empty, we may drop the constraint in~\eqref{robust_pro_2} and this problem becomes equivalent to
\begin{equation*}
\max_{\pi\in\Pi_S^A}\min_{\P\in\mathcal U(\widehat P,e)} \return\big(\pi,\M(P)\big),
\end{equation*}
where the dominating class of policies of this problem is stationary Markovian $\Pi_S$. Thus the primal Lagrangian duality formulation implies that
\begin{equation*}
\max_{\pi\in\Pi_S^A} \min_{\lambda\geq 0} \mathcal L(\pi,\lambda)= \max_{\pi\in\Pi_S^A}\min_{P\in\mathcal U(\widehat{P},e)}\return\big(\pi,\M(P)\big)=\max_{\pi\in\Pi_S}\min_{P\in\mathcal U(\widehat{P},e)}\return\big(\pi,\M(P)\big),
\end{equation*}
i.e., $\max_{\pi\in\Pi_S^A} \min_{\lambda\geq 0} \mathcal L(\pi,\lambda)$ equals to the solution of Algorithm~\ref{alg:robust}. Since the objective function of~\eqref{robust_pro_2} is lower than that of~\eqref{problem:ori} and both problems share the same feasible set, it is obvious that 
\begin{equation*}
\max_{\pi\in\Pi_S^A} \min_{\lambda\geq 0}\mathcal L(\pi,\lambda)\leq \max_{\pi\in\Pi_S^A} \min_{\lambda\geq 0}\min_{P\in\mathcal U(\widehat P,e)}L_P(\pi,\lambda).
\end{equation*}
Combining these arguments, we have just showed that if $\pi_R$ is a safe policy, then
\begin{equation*}
\return(\pi_B,\M^\star)\leq \min_{P\in\mathcal U(\widehat{P},e)}\return\big(\pi_R,\M(P)\big)\leq \min_{\lambda\geq 0}\max_{\pi\in\Pi_S^A}\min_{P\in\mathcal U(\widehat P,e)}L_P(\pi,\lambda)\leq\min_{P\in\mathcal U(\widehat{P},e)} L_P(\pi_{AR},\lambda^\star),
\end{equation*}
where $(\pi_{AR},\lambda^\star)$ is the \emph{maximin} saddle-point solution of $\min_{P\in\mathcal U(\widehat P,e)}L_P(\pi,\lambda)$.
\end{proof}


\subsection{Proof of Theorem \ref{thm:perf_robust_interleave}}
\label{appendix:robust_aug_interleave}

\begin{proof}
Consider an arbitrary $\widetilde{P} \in \mathcal{U}(\widehat{P},e)$. Then from Assumption \ref{asm:error} and the construction of $\mathcal{U}(\widehat{P},e)$, we have:
\begin{align*}
\| \widetilde{P}(\cdot|x,a) - P\opt(\cdot|x,a) \|_1 &\le \| \widetilde{P}(\cdot|x,a) - \widehat{P}(\cdot|x,a) + \widehat{P}(\cdot|x,a) + P\opt(\cdot|x,a) \|_1 \\
&\le \| \widetilde{P}(\cdot|x,a) - \widehat{P}(\cdot|x,a) \|_1 + \| \widehat{P}(\cdot|x,a) + P\opt(\cdot|x,a) \|_1  \\
&\le 2 e(x,a) ~.
\end{align*}
Then, using Lemma \ref{lem:bound_transitions} with the above difference between $\widetilde{P}$ and $P\opt$,  we get for any policy $\pi$ that:
\begin{equation} \label{eq:robi_proof_upper}
\max_{P\in\mathcal{U}( \widehat{P},e )} \Bigl|  \rho(\pi, P) - \rho(\pi, P\opt) \Bigr|  \le \frac{2\gamma \rmax}{1-\gamma} \, p_0\tr (\eye - P_\pi\opt ) \, e_\pi = \frac{2\gamma \rmax}{(1-\gamma)^2} \, \| e_\pi \|_{1,u}  ~,    
\end{equation}
where $u_\pi$ is the normalized state occupancy frequency for policy $\pi$ defined as:
\[ u = (1-\gamma) (\eye - \gamma P_\pi\tr)^{-1} p_0. \]

To prove the safety of $\polrobi$, note that the objective in \eqref{eq:objective_robust_interleave} is always non-negative since $\pi_B$ is feasible. Then we get the safety condition by simple algebraic manipulation as follows:
\begin{align*}
\min_{P\in\mathcal{U}( \widehat{P},e )} \Big( \return(\polrobi, P) - \return(\pi_B, P) \Big) &\ge 0 \\
\return(\polrobi, P\opt) &\ge \return(\pi_B, P\opt)
\end{align*}
The safety of the policy $\polrobi$ also implies that its performance loss is bounded by the performance loss of the base policy:
\begin{equation} \label{eq:performance_loss_robi}
\return(\polrobi, P\opt) - \return(\polrobi, P\opt)  \le \return(\polrobi, P\opt) - \return(\pi_B, P\opt)
\end{equation} 

Now  we ready to show a bound on the performance loss of $\polrobi$ by lower bounding $\return(\polrobi,P\opt)$ as follows:
\[ \return(\polrobi,P\opt) = \return(\polrobi,P\opt)  - \return(\pi_B,P\opt) + \return(\pi_B,P\opt) \ge \min_{P\in\mathcal{U}(\widehat{P},e)} \Big( \return(\polrobi,P)  - \return(\pi_B,P) \Big) + \return(\pi_B,P\opt) ~. \] 
From the optimality of $\polrobi$, we further get:
\begin{align*}  
\min_{P\in\mathcal{U}(\widehat{P},e)} \Big( \return(\polrobi,P)  - \return(\pi_B,P) \Big) &\ge \min_{P\in\mathcal{U}(\widehat{P},e)} \Big( \return(\pol\opt,P) - \return(\pi_B,P) \Big) \\
&\ge \min_{P\in\mathcal{U}(\widehat{P},e)} \return(\pol\opt,P) - \max_{P\in\mathcal{U}(\widehat{P},e)} \return(\pi_B,P) ~.
\end{align*}
Putting the above together and some simple algebraic manipulation subtracting and adding $\return(\pol\opt,P\opt)$, we get:
\[ \return(\pol\opt,P\opt) - \return(\polrobi,P\opt) \le \max_{P\in\mathcal{U}(\widehat{P},e)} \Big( \return(\pol\opt,P\opt) - \return(\pol\opt, P) \Big) + \max_{P\in\mathcal{U}(\widehat{P},e)} \Big( \return(\pi_B, P) - \return(\pi_B,P\opt)  \Big). \]
The bound in the theorem then follows by bounding the maximization terms above using \eqref{eq:robi_proof_upper} and combining the above inequality with \eqref{eq:performance_loss_robi}.

Figure \ref{fig:example_tight_bound} depicts an example demonstrating the tightness of the bound. The initial state is $s_0$, actions are $a_1,a_2$, and the transitions are deterministic. $\xi\opt$ denotes the true transitions, $\xi_1$ denotes the worst case in $\mathcal{U}(\widehat{P},e)$, and the leafs shows the returns of the remainder of the MDP and are assumed to be known with certainty. The value for $\epsilon$ is given by \eqref{eq:robi_proof_upper} . That is $\widehat{\xi}$ represents the sampled transition probability, it would be halfway between $\xi\opt$ and $\xi_1$.
\end{proof}

\begin{figure}
\centering
\begin{tikzpicture}[->,>=stealth',shorten >=1pt,auto,node distance=2cm,semithick]
	\tikzstyle{action}= [circle,fill=black,text=white]

	\node[state] 		 (s1)                    														{$s_0$};
	\node[action]      (a1)         [below left of=s1]     	{$a_1$};
	\node[action]      (a2)         [below right of=s1]    {$a_2$};

	\node (o11) [below left of=a1,xshift=5mm] {$1$};
	\node (o12) [below right of=a1,xshift=-5mm] {$1+\epsilon$};
	\node (o21) [below left of=a2,xshift=5mm] {$1+2\epsilon$};
	\node (o22) [below right of=a2,xshift=-5mm] {$1+\epsilon$};

	\path (s1) edge node [left]{$\pi_{\text{B}},\pi_{\text{I}}$} (a1) ;
	\path (a1) edge node [left]{$\xi^\star$} (o11);
	\path (a1) edge node [right]{$\xi_1$} (o12);

	\path (s1) edge node [right]{$\pi^\star$} (a2) ;
	\path (a2) edge node [left]{$\xi_1$} (o21);
	\path (a2) edge node [right]{$\xi^\star$} (o22);

\end{tikzpicture}
\caption{Example showing tightness of bound in Theorem \ref{thm:perf_robust_interleave}.} \label{fig:example_tight_bound}
\end{figure}
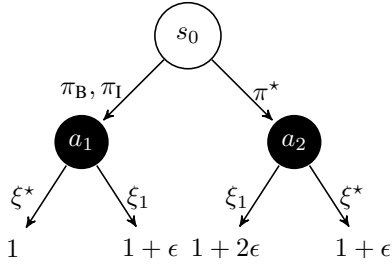

\section{Alternative Bounds}

\subsection{An Alternative Bound on the Performance Loss of $\pi_{Ra}$}
\label{appendix:ThmPerfRewAdj2}

\begin{corollary}\label{coro:perf_reward_adjusted}
Given Assumption~\ref{asm:error}, the performance loss $\Phi(\pi_{Ra})$ also satisfies 
\[
\Phi(\pi_{Ra}) \le \min \left\{ \frac{\text{BR}(\pi_{Ra})}{1-\gamma}+\max_{\pi\in\Pi_S}\frac{2\gamma\rmax}{(1-\gamma)^2}  \| e_{\pi_{\widehat\M}} \|_{1,\pi^\pi_{\widehat\M}},  \Phi(\pi_B) \right\},
\]
where $\text{BR}(\pi_{Ra})=\max_{x\in\mathcal X}\left|\widehat{T}[\widehat{V}^{\pi_{Ra}}](x)-\widehat{V}^{\pi_{Ra}}(x)\right|$ is the Bellman residual w.r.t. Bellman operator $\widehat{T}[V](x)=\max_{a\in\mathcal A}\left\{\widehat r(x,a)+\gamma\sum_{x^\prime\in\mathcal X}\widehat{P}(x^\prime|x,a)V(x^\prime)\right\}$, value function $\widehat{V}^{\pi_{Ra}}(x)=\rho(\pi_{Ra},\widehat\M)$ at $x_0=x$ and $\pi^\pi_{\widehat\M}$ is the normalized state occupancy frequency of the policy $\pi$.
\end{corollary}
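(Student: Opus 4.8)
The plan is to retain the safety half of the claim from Theorem~\ref{thm:perf_reward_adjusted} and to prove the first argument of the minimum by a new decomposition of $\Phi(\pi_{Ra})$ that passes through the reward-adjusted MDP $\widetilde{\M}$ and isolates a Bellman-residual term. Theorem~\ref{thm:perf_reward_adjusted} already establishes that $\pi_{Ra}$ is safe, i.e.\ $\return(\pi_{Ra},\M^\star)\ge\return(\pi_B,\M^\star)$, which immediately gives $\Phi(\pi_{Ra})\le\Phi(\pi_B)$; this supplies the second argument of the $\min$, so it remains only to bound the first.

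Starting from \eqref{eq:AppC1}, which gives $\return(\pi,\widetilde{\M})\le\return(\pi,\M^\star)$ for every policy $\pi$, and writing $\widehat{V}^\star$ for the optimal value function of $\widetilde{\M}$ (so that $\max_{\pi\in\Pi_S}\return(\pi,\widetilde{\M})=p_0^\top\widehat{V}^\star$), I would bound
\[
\Phi(\pi_{Ra})\le\return(\pi^\star_{\M^\star},\M^\star)-\return(\pi_{Ra},\widetilde{\M})=\Big(\return(\pi^\star_{\M^\star},\M^\star)-p_0^\top\widehat{V}^\star\Big)+\Big(p_0^\top\widehat{V}^\star-\return(\pi_{Ra},\widetilde{\M})\Big).
\]
For the first parenthesis, optimality of $\widehat{V}^\star$ in $\widetilde{\M}$ gives $p_0^\top\widehat{V}^\star\ge\return(\pi^\star_{\M^\star},\widetilde{\M})$, so it is enough to bound $\return(\pi^\star_{\M^\star},\M^\star)-\return(\pi^\star_{\M^\star},\widetilde{\M})$. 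I would apply Lemma~\ref{lem:bound_transitions} with $\M_1=\widetilde{\M}$, $\M_2=\M^\star$, $\pi_1=\pi_2=\pi^\star_{\M^\star}$: these MDPs differ in their transitions by at most $e$ in $L_1$ (Assumption~\ref{asm:error}) and in their rewards by exactly $\frac{\gamma\rmax}{1-\gamma}e$ (by \eqref{eq:rhat}), so the lemma yields $\return(\pi^\star_{\M^\star},\M^\star)-\return(\pi^\star_{\M^\star},\widetilde{\M})\le\frac{2\gamma\rmax}{(1-\gamma)^2}\|e_{\pi^\star_{\M^\star}}\|_{1,u^{\pi^\star_{\M^\star}}_{\widehat{\M}}}$, and this is at most $\max_{\pi\in\Pi_S}\frac{2\gamma\rmax}{(1-\gamma)^2}\|e_\pi\|_{1,u^{\pi}_{\widehat{\M}}}$. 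The delicate point is that Lemma~\ref{lem:bound_transitions} must be invoked with $\M_1=\widetilde{\M}$ (not $\M_1=\M^\star$), so that the resolvent in the bound is $(\eye-\gamma\widehat{P}_{\pi^\star_{\M^\star}})^{-1}$ and the occupancy frequency $u^{\pi}_{\widehat{\M}}=(1-\gamma)p_0^\top(\eye-\gamma\widehat{P}_\pi)^{-1}$ is the one of the \emph{simulator}, exactly as the corollary states; everything else in this step is routine.

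For the second parenthesis, $p_0^\top\widehat{V}^\star-\return(\pi_{Ra},\widetilde{\M})=p_0^\top(\widehat{V}^\star-\widehat{V}^{\pi_{Ra}})$, where $\widehat{V}^{\pi_{Ra}}$ is the value function of $\pi_{Ra}$ in $\widetilde{\M}$. Because $\widehat{V}^{\pi_{Ra}}$ is a fixed point of its policy-evaluation operator and $\widehat{T}$ maximizes over actions, $\widehat{V}^{\pi_{Ra}}\le\widehat{T}\widehat{V}^{\pi_{Ra}}$; iterating the monotone $\gamma$-contraction $\widehat{T}$ (which satisfies $\widehat{T}(V+c\one)=\widehat{T}V+\gamma c\one$) then gives the standard one-sided Bellman-residual estimate $0\le\widehat{V}^\star-\widehat{V}^{\pi_{Ra}}\le\frac{1}{1-\gamma}\|\widehat{T}\widehat{V}^{\pi_{Ra}}-\widehat{V}^{\pi_{Ra}}\|_\infty\one=\frac{\mathrm{BR}(\pi_{Ra})}{1-\gamma}\one$, so the second parenthesis is at most $\mathrm{BR}(\pi_{Ra})/(1-\gamma)$. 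Adding the two bounds and taking the minimum with $\Phi(\pi_B)$ from the safety argument yields the corollary. I do not expect any genuine obstacle beyond the bookkeeping in the first parenthesis; the only real choice there is orienting Lemma~\ref{lem:bound_transitions} so the occupancy frequencies are measured under the simulator $\widehat{\M}$.
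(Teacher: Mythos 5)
Your proposal is correct and follows essentially the same route as the paper: the same split of $\return(\pi^\star_{\M^\star},\M^\star)-\return(\pi_{Ra},\widetilde{\M})$ into a model-error term (bounded via Lemma~\ref{lem:bound_transitions} oriented so the resolvent, and hence the occupancy weights, are those of $\widehat{\M}$) plus a Bellman-residual term bounded by $\mathrm{BR}(\pi_{Ra})/(1-\gamma)$ through iterating the contraction $\widehat{T}$, with the $\Phi(\pi_B)$ branch coming from the safety guarantee of Theorem~\ref{thm:perf_reward_adjusted}. The only cosmetic difference is that you bound the first term through $\return(\pi^\star_{\M^\star},\widetilde{\M})$ before maximizing over policies, while the paper takes the maximum of the per-policy differences directly; these are equivalent.
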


\begin{proof}
Similar to the proof of  Theorem \ref{coro:perf_reward_adjusted}, we notice that
\begin{equation}\label{bdd:eval_sim}
\return(\pi^*_{\M^\star},\M^\star) - \return(\pi_0,\widetilde{\M}) =\underbrace{\return(\pi^*_{\M^\star},\M^\star)- \max_{\pi\in\Pi_S}\return(\pi,\widetilde{\M})}_{(a)} +\underbrace{\max_{\pi\in\Pi_S}\return(\pi,\widetilde{\M}) - \return(\pi_0,\widetilde{\M}) }_{(b)}
\end{equation}

First we prove an upper bound for (a). Recall the Bellman operator $\widehat T$ for value function $V:\reals^{\mathcal X}\rightarrow\reals^{\mathcal X}$ as $\widehat{T}[V](y)=\max_{a\in\mathcal A}\left\{\widehat r(y,a)+\gamma\sum_{y^\prime\in\mathcal X}\widehat{P}(y^\prime|y,a)V(y^\prime)\right\}$. Also define the value function $\widehat{V}^{\pi_0}(x)$ as $\return(\pi_0,\widetilde{\M})$ and the optimal value function $\widehat{V}(x)$ as $\max_{\pi\in\Pi_S}\return(\pi,\widetilde{\M})$, when the initial state is $x_0=x$. 
By applying the contraction mapping property on $\widehat{T} [\widehat{V}^{\pi_0}](y)-\widehat{V}^{\pi_0}(y)$ for any $y\in\mathcal X$ and combing with the definition of Bellman residual, one obtains
\[
\widehat{T}^2 [\widehat{V}^{\pi_0}](y)-\widehat{T}[\widehat{V}^{\pi_0}](y)\leq \gamma\text{BR}(\pi_0).
\]
By an induction argument, the above expression becomes 
\[
\widehat{T}^N[\widehat{V}^{\pi_0}](y)-\widehat{T}^{N-1}[\widehat{V}^{\pi_0}](y)\leq \gamma^{N-1}\text{BR}(\pi_0),
\]
for which by a telescoping sum, it further implies
\[
\widehat{T}^N [\widehat{V}^{\pi_0}](y)-\widehat{V}^{\pi_0}(y)= \sum_{k=1}^{N}\widehat{T}^k [\widehat{V}^{\pi_0}](y)-\widehat{T}^{k-1}[\widehat{V}^{\pi_0}](y)
\leq  \sum_{k=1}^N\gamma^{k-1}\text{BR}(\pi_0).
\]
By letting $N\rightarrow\infty$ and noticing that $\lim_{N\rightarrow\infty}\widehat{T}^N [\widehat{V}^{\pi_0}](y)=\widehat{V}(y)$, one finally obtains
\begin{equation}\label{eq:BR}
(b) \le\left|\max_{\pi\in\Pi_S}\return(\pi,\widetilde{\M}) - \return(\pi_0,\widetilde{\M})\right|=\left|\sum_{y\in\mathcal X} P_0(y)\left(\widehat{V}(y)-\widehat{V}^{\pi_0}(y)\right)\right|\leq  \lim_{N\rightarrow\infty}\sum_{k=1}^N\gamma^{k-1}\text{BR}(\pi_0)= \frac{\text{BR}(\pi_0)}{1-\gamma}.
\end{equation}
For an upper bound in (a), by interchanging $\M$ and $\widetilde\M$ in the derivation of \ref{eq:exp_proof_lower} and taking maximum on both sides, we also have,
\[
(a)=\max_{\pi\in\Pi_S}\return(\pi,{\M}^\star)-\max_{\pi\in\Pi_S}\return(\pi,\widetilde{\M})\le\max_{\pi\in\Pi_S}\frac{2\gamma\rmax}{1-\gamma} p_0\tr (\eye - \disc \widehat P_{\pi})^{-1} e_{\pi}\leq\max_{\pi\in\Pi_S}\frac{2\gamma\rmax}{(1-\gamma)^2}  \| e_{\pi_{\widehat\M}} \|_{1,\pi^\pi_{\widehat\M}} .
\]
Then the proof is completed by  combining both parts of the above arguments.
\end{proof}

\subsection{An Alternative Bound on the Performance Loss of $\pi_R$}
\label{appendix:ThmPerfRobust2}

\begin{corollary}\label{coro:perf_robust}
Given Assumption~\ref{asm:error}, the performance loss $\Phi(\pi_{R})$ also satisfies 
\[
\Phi(\pi_{R}) \le \min \left\{ \frac{\text{BR}(\pi_{R})}{1-\gamma}+\max_{\pi\in\Pi_S}\min_{P\in\mathcal{U}(\widehat{P},e)}\frac{2\gamma\rmax}{(1-\gamma)^2} \| e_{\pi_{\M(P)}} \|_{1,\pi_{\M(P)}},  \Phi(\pi_B) \right\},
\]
where $\text{BR}(\pi_{R})=\max_{x\in\mathcal X}\left|\mathcal{T}[{V}^{\pi_{R}}](x)-{V}^{\pi_{R}}(x)\right|$ is the Bellman residual w.r.t. Bellman operator $\mathcal{T}[V](x)=\max_{a\in\mathcal A}\left\{ r(x,a)+\gamma\min_{P\in\mathcal U(\widehat P,e)}\sum_{x^\prime\in\mathcal X}{P}(x^\prime|x,a)V(x^\prime)\right\}$, value function ${V}^{\pi_{R}}(x)=\min_{P\in\mathcal U(\widehat P,e)}\rho(\pi_{R},\M(P))$ at $x_0=x$ and $u^\pi_{\M(P)}$ is the normalized state occupancy frequency of the policy $\pi$.
\end{corollary}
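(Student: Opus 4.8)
The plan is to mirror the proof of Corollary~\ref{coro:perf_reward_adjusted}, replacing the reward-adjusted MDP $\widetilde\M$ by the robust MDP $\{\M(P):P\in\mathcal U(\widehat P,e)\}$ and the Bellman operator $\widehat T$ by the robust Bellman operator $\mathcal T$ from Appendix~\ref{appendix:saddle_point}. As in the proof of Theorem~\ref{thm:perf_robust}, if the acceptance test on Line~\ref{ln:algrob_condition} of Algorithm~\ref{alg:robust} fails then $\Phi(\pi_R)=\Phi(\pi_B)$ and the second term of the outer minimum is attained, so it suffices to treat the accepting case $\pi_R=\pi_0$. There, \eqref{eq:rob_proof_upper} gives $\Phi(\pi_R)\le\return(\pi^\star_{\M^\star},\M^\star)-\min_{P\in\mathcal U(\widehat P,e)}\return(\pi_R,\M(P))$, and it remains to bound the right-hand side by $\frac{\text{BR}(\pi_R)}{1-\gamma}+\max_{\pi\in\Pi_S}\frac{2\gamma\rmax}{(1-\gamma)^2}\|e_\pi\|_{1,u^\pi_{\M(\bar P)}}$, where for each $\pi$ we let $\bar P\in\argmin_{P\in\mathcal U(\widehat P,e)}\return(\pi,\M(P))$ (this worst-case model is the one meant by the inner ``$\min_P$'' in the factor $\min_P\|e_\pi\|_{1,u^\pi_{\M(P)}}$ of the statement).

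Following \eqref{bdd:eval_sim}, split this quantity as $(a)+(b)$ with $(a)=\return(\pi^\star_{\M^\star},\M^\star)-\max_{\pi\in\Pi_S}\min_P\return(\pi,\M(P))$ and $(b)=\max_{\pi\in\Pi_S}\min_P\return(\pi,\M(P))-\min_P\return(\pi_R,\M(P))$. Term $(b)$ is bounded exactly as in \eqref{eq:BR}: let $V^{\pi_R}$ be the vector with entries $V^{\pi_R}(x)=\min_P\return(\pi_R,\M(P)\mid x_0=x)$; by definition of the robust Bellman residual $\mathcal T[V^{\pi_R}]-V^{\pi_R}\le\text{BR}(\pi_R)\one$, and since $\mathcal T$ is a monotone $\gamma$-contraction with $\mathcal T[V+c\one]=\mathcal T[V]+\gamma c\one$ (standard robust value-iteration facts, see Appendix~\ref{appendix:saddle_point}), induction yields $\mathcal T^{N}[V^{\pi_R}]-\mathcal T^{N-1}[V^{\pi_R}]\le\gamma^{N-1}\text{BR}(\pi_R)\one$; telescoping and letting $N\to\infty$, using $\mathcal T^{N}[V^{\pi_R}]\to\mathcal V$ with $p_0^\top\mathcal V=\max_\pi\min_P\return(\pi,\M(P))$, gives $\mathcal V-V^{\pi_R}\le\frac{\text{BR}(\pi_R)}{1-\gamma}\one$, and an inner product with $p_0$ gives $(b)\le\frac{\text{BR}(\pi_R)}{1-\gamma}$. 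When the RMDP is solved exactly $\text{BR}(\pi_R)=0$ and this term vanishes; the residual quantifies the slack from inexact robust value iteration.

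For term $(a)$, write $\return(\pi^\star_{\M^\star},\M^\star)=\max_\pi\return(\pi,\M^\star)$ and use $\max f-\max g\le\max(f-g)$ to get $(a)\le\max_{\pi\in\Pi_S}\big(\return(\pi,\M^\star)-\min_P\return(\pi,\M(P))\big)$; then bound the bracket for each fixed $\pi$ by an ``interchanged'' version of \eqref{eq:rob_proof_lower}. With $\bar P$ the return-minimizer, $\|\bar P(\cdot|x,a)-P^\star(\cdot|x,a)\|_1\le 2e(x,a)$ by the triangle inequality (as in the proof of Theorem~\ref{thm:perf_robust}), so applying Lemma~\ref{lem:bound_transitions} with $\M_1=\M(\bar P)$, $\M_2=\M^\star$, $\pi_1=\pi_2=\pi$, $g=2e_\pi$ gives $|\return(\pi,\M^\star)-\return(\pi,\M(\bar P))|\le\frac{2\gamma\rmax}{1-\gamma}p_0^\top(\eye-\gamma\bar P_\pi)^{-1}e_\pi=\frac{2\gamma\rmax}{(1-\gamma)^2}\|e_\pi\|_{1,u^\pi_{\M(\bar P)}}$, where $\bar P_\pi$ is the $\pi$-induced transition matrix of $\M(\bar P)$ and $u^\pi_{\M(\bar P)}=(1-\gamma)p_0^\top(\eye-\gamma\bar P_\pi)^{-1}$. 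Combining with the bound on $(b)$ and taking $\max_{\pi\in\Pi_S}$ gives the claim.

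The step I expect to be the main obstacle is this last use of Lemma~\ref{lem:bound_transitions} in $(a)$. In \eqref{eq:rob_proof_lower} the bracket is naturally bounded by a norm weighted by $u^\pi_{\M^\star}$, the occupancy of the \emph{unknown} true model; to instead obtain the occupancy of the worst-case model $\M(\bar P)$ — which is what the statement uses and which is actually computable once the RMDP has been solved — one must swap the roles of $\M^\star$ and $\M(\bar P)$ in the lemma, which is legitimate only because Lemma~\ref{lem:bound_transitions} is two-sided and because the transition gap is $\le 2e$ (not $\le e$), so the constant $2$ persists exactly as in Theorem~\ref{thm:perf_robust}. One must also commit to reading the ``$\min_P$'' in the statement as ``at a return-minimizing transition model'' (in parallel with how the earlier bounds are written at the occupancy frequency of a distinguished policy); a literal minimum over $P\in\mathcal U(\widehat P,e)$ of the weighted error is strictly smaller and would not follow from this argument with the same constant.
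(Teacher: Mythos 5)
Your proof follows essentially the same route as the paper's: the same decomposition into the robust-Bellman-residual term (bounded via the contraction/monotonicity argument exactly as in \eqref{eq:BR}, with $\widehat T$ replaced by the robust operator $\mathcal T$) and the model-error term, bounded by the ``interchanged'' application of Lemma~\ref{lem:bound_transitions} with the $2e$ gap from the triangle inequality. Your caveat about reading the inner $\min_{P}$ as evaluation at the return-minimizing worst-case model $\bar P$ (rather than a literal minimum of the weighted error norm over $\mathcal U(\widehat P,e)$) is well taken --- the paper's own one-line derivation is exactly that interchange and does not justify the stronger literal reading --- so your argument matches the paper's intended proof while being more explicit about that step.
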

\begin{proof}
For the proof of the alternative performance bound, by following the same analysis as in the derivation of \eqref{bdd:eval_sim}, replacing the bellman operator $\widehat T$ with the robust Bellman operator:
\[
\mathcal{T}[V](y)=\max_{a\in\mathcal A}\left\{ r(y,a)+\gamma\min_{P\in\mathcal U(\widehat P,e)}\sum_{y^\prime\in\mathcal X}{P}(y^\prime|y,a)V(y^\prime)\right\},
\]
and defining the robust value function ${V}^{\pi_0}(x)$ as $\min_{P\in\mathcal{U}(\widehat{P},e)}\return\big(\pi_0,{\M}(P)\big)$ when the initial state is $x_0=x$,
we can easily show that
\[
\max_{\pi\in\Pi_S}\min_{P\in\mathcal{U}(\widehat{P},e)}\return(\pi,{\M}(P)) - \min_{P\in\mathcal{U}(\widehat{P},e)}\return(\pi_0,{\M}(P))\leq\frac{\text{BR}(\pi_0)}{1-\gamma},
\]
and
\[
\begin{split}
\max_{\pi\in\Pi_S}\return(\pi,{\M}^\star)-\max_{\pi\in\Pi_S}\min_{P\in\mathcal{U}(\widehat{P},e)}\return\big(\pi,\M(P)\big) \le&\max_{\pi\in\Pi_S}\min_{P\in\mathcal{U}(\widehat{P},e)}\frac{2\gamma\rmax}{1-\gamma} p_0\tr (\eye - \disc  P_{\pi})^{-1} e_{\pi}\\
\leq& \max_{\pi\in\Pi_S}\min_{P\in\mathcal{U}(\widehat{P},e)}\frac{2\gamma\rmax}{(1-\gamma)^2}  \| e_{\pi_{\M(P)}} \|_{1,u^\pi_{\M(P)}}.
\end{split}
\]
The proof is then completed by combining both of the above results.
\end{proof}

\subsection{An Alternative Bound on the Performance Loss of $\pi_{AR}$}
\label{appendix:robust_aug_perf2}


\begin{corollary}
\label{coro:perf_robust_aug}
Given Assumption~\ref{asm:error}, the performance loss $\Phi(\pi_{AR})$ also satisfies 
\[
\Phi(\pi_{AR}) \le \min \left\{ \frac{\text{BR}(\pi_{AR})}{1-\gamma}+\max_{\pi\in\Pi_S}\min_{P\in\mathcal{U}(\widehat{P},e)}\frac{2\gamma\rmax}{(1-\gamma)^2} \| e_{\pi_{\M(P)}} \|_{1,\pi_{\M(P)}},  \Phi(\pi_B) \right\}.
\]
where $u^\pi_{\M(P)}$ is the normalized state occupancy frequency of the policy $\pi$ and the Bellman residual for robust MDPs is $\text{BR}(\pi_{AR})=\text{BR}_{1,0}(\pi_{AR})$, for which the generic case $(\lambda_1,\lambda_2)\geq 0$, the Bellman residual w.r.t. Bellman operator $\mathcal{T}_{\lambda_1\lambda_2}[V](x,y)=\max_{a\in\mathcal A}\left\{ r^A_{\lambda_1,\lambda_2}(x,y,a)+\gamma\min_{P\in\mathcal U(\widehat P,e)}\sum_{x^\prime,y^\prime\in\mathcal X}{P}^A(x^\prime,y^\prime|x,y,a)V(x^\prime,y^\prime)\right\}$ is given by $\text{BR}_{\lambda_1,\lambda_2}(\pi_{RS})=\max_{x,y\in\mathcal X}\left|\mathcal{T}_{\lambda_1\lambda_2}[{V}_{\lambda_1\lambda_2}^{\pi_{AR}}](x,y)-{V}_{\lambda_1\lambda_2}^{\pi_{AR}}(x,y)\right|$ where ${V}^{\pi_{AR}}(x,y)=\min_{P\in\mathcal U(\widehat P,e)}\rho\big(\pi_{AR},\M^A_{\lambda_1\lambda_2}(P)\big)$ is the value function at $x_0=x$.
\end{corollary}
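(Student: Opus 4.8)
The plan is to transport the argument of Corollary~\ref{coro:perf_robust} to the augmented setting, using the same reduction employed in the proof of Theorem~\ref{thm:perf_robust_aug}. First I would specialize to $(\lambda_1,\lambda_2)=(1,0)$: then $r^A_{1,0}(x,y,a)=r(x,a)$ and $P^A(x',y'\mid x,y,a)=P(x'\mid x,a)\widehat P(y'\mid y,a)$ render the $y$-coordinate inert, so $\M^A_{1,0}(P)$ is, for the purposes of value and policy optimization, just $\M(P)$, and $\M^A_{1,0}(P\opt)=\M^\star$. Consequently the $\lambda$-parametrized robust Bellman operator $\mathcal{T}_{1,0}$ collapses to the ordinary robust Bellman operator $\mathcal{T}[V](x)=\max_{a}\{r(x,a)+\disc\min_{P\in\mathcal{U}(\widehat P,e)}\sum_{x'}P(x'\mid x,a)V(x')\}$ of Corollary~\ref{coro:perf_robust}, the robust value function $V^{\pi_{AR}}$ depends on $x$ only, and $\text{BR}(\pi_{AR})=\text{BR}_{1,0}(\pi_{AR})$ is exactly the robust Bellman residual of $\pi_{AR}$ there.

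After this reduction the proof is a repetition of Corollary~\ref{coro:perf_robust}. I would first dispose of the trivial case: if Algorithm~\ref{alg:robust_aug} returns $\pi_{AR}=\pi_B$ (no saddle point exists), then $\Phi(\pi_{AR})=\Phi(\pi_B)$, which is the right branch of the minimum; so assume a saddle point exists. Using the safety of $\pi_{AR}$ (Theorem~\ref{thm:perf_robust_aug}) together with $P\opt\in\mathcal{U}(\widehat P,e)$ (hence $\return(\pi_{AR},\M^\star)\ge\min_{P\in\mathcal{U}(\widehat P,e)}\return(\pi_{AR},\M(P))$), I would split, exactly as in~\eqref{bdd:eval_sim},
\begin{align*}
\Phi(\pi_{AR}) &\le \underbrace{\return(\pi^\star_{\M^\star},\M^\star)-\max_{\pi\in\Pi_S}\min_{P\in\mathcal{U}(\widehat P,e)}\return(\pi,\M(P))}_{(a)} \\
&\quad + \underbrace{\max_{\pi\in\Pi_S}\min_{P\in\mathcal{U}(\widehat P,e)}\return(\pi,\M(P))-\min_{P\in\mathcal{U}(\widehat P,e)}\return(\pi_{AR},\M(P))}_{(b)}.
\end{align*}

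For $(b)$ I would run the Bellman-residual/contraction telescoping argument: contractivity of $\mathcal{T}$ yields $\mathcal{T}^{N}[V^{\pi_{AR}}]-\mathcal{T}^{N-1}[V^{\pi_{AR}}]\le\disc^{\,N-1}\text{BR}(\pi_{AR})$ by induction, and summing the telescope as $N\to\infty$ (using $\mathcal{T}^{N}[V^{\pi_{AR}}]\to$ the optimal robust value) gives $(b)\le\text{BR}(\pi_{AR})/(1-\disc)$. For $(a)$ I would use $\max_\pi f(\pi)-\max_\pi g(\pi)\le\max_\pi(f(\pi)-g(\pi))$, the factor-two triangle inequality $\|\bar P(\cdot\mid x,a)-P\opt(\cdot\mid x,a)\|_1\le 2e(x,a)$ valid for every $\bar P\in\mathcal{U}(\widehat P,e)$, and Lemma~\ref{lem:bound_transitions}, which together give $(a)\le\max_{\pi\in\Pi_S}\min_{P\in\mathcal{U}(\widehat P,e)}\frac{2\disc\rmax}{(1-\disc)^2}\|e_{\pi_{\M(P)}}\|_{1,u^\pi_{\M(P)}}$. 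Adding the two bounds and taking the minimum with the $\Phi(\pi_B)$ bound from the trivial case yields the claim.

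The one step that needs genuine care — and which I expect to be the main obstacle — is the reduction in the first paragraph: one must verify that with $(\lambda_1,\lambda_2)=(1,0)$ the augmented robust value iteration and its fixed point really do factor through the $x$-marginal (the $y$-dynamics being driven by $\widehat P$ and contributing nothing to the reward), so that $\text{BR}_{1,0}(\pi_{AR})$ is literally the robust Bellman residual appearing in Corollary~\ref{coro:perf_robust}. Once that identification is in place, the rest is an unmodified rerun of that corollary's proof.
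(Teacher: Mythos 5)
Your proposal is correct and follows essentially the same route as the paper, whose proof of Corollary~\ref{coro:perf_robust_aug} simply invokes the $(\lambda_1,\lambda_2)=(1,0)$ reduction from Theorem~\ref{thm:perf_robust_aug} (so that $\M^A_{1,0}(P)=\M(P)$ and $\M^A_{1,0}(P^\star)=\M^\star$) and then repeats the decomposition, Bellman-residual telescoping, and Lemma~\ref{lem:bound_transitions} argument of Corollary~\ref{coro:perf_robust}; you have merely spelled out the details the paper omits. The one small imprecision is your claim that $V^{\pi_{AR}}_{1,0}$ depends on $x$ only: the augmented policy $\pi_{AR}$ may condition on the simulator coordinate $y$, so its value function can depend on $(x,y)$, but since $\text{BR}_{1,0}$ is defined as a maximum over all $(x,y)$ and the fixed point of $\mathcal{T}_{1,0}$ (the optimal robust value) is $y$-independent, the telescoping bound and the rest of the argument go through unchanged.
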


\begin{proof}
Since to the proof of Theorem~\ref{thm:perf_robust_aug}, the proof of this corollary follows identical arguments from Corollary~\ref{coro:perf_robust} and is omitted for the sake of brevity.
\end{proof}

\end{document}